\tikzset{
	modal/.style={>=stealth’,shorten >=1pt,shorten <=1pt,auto,node distance=1.5cm,
		semithick},
	world/.style={circle, draw,minimum size=.1cm,fill=gray!15},
	point/.style={circle,draw,inner sep=0.3mm,fill=black},
	circ/.style={circle,draw,inner sep=0.1mm,fill=white},
	reflexive above/.style={->,loop,looseness=7,in=120,out=60},
	reflexive below/.style={->,loop,looseness=7,in=240,out=300},
	reflexive left/.style={->,loop,looseness=7,in=150,out=210},
	reflexive right/.style={->,loop,looseness=7,in=30,out=330}
}
\theoremstyle{definition}
\newtheorem{defn}{Definition}[section]
\newtheorem{fact}[defn]{Fact}
\newtheorem{prop}[defn]{Proposition}
\newtheorem{thm}[defn]{Theorem}
\newtheorem{lem}[defn]{Lemma}
\newtheorem{question}[defn]{Question}
\newtheorem{remark}[defn]{Remark}
\newtheorem{observation}[defn]{Observation}
\newtheorem{claim}[defn]{Claim}
\title[list-distinguishing chromatic numbers]{Upper bounds for the list-distinguishing chromatic number}
\author{Amitayu Banerjee$^{\ast}$}
\address{Alfr\'ed R\'enyi Institute of Mathematics, Reáltanoda utca 13-15, 1053, Budapest, Hungary}
\email[Corresponding author]{banerjee.amitayu@gmail.com}
\thanks{$^{\ast}$ Corresponding author.}
\author{Zal\'{a}n Moln\'{a}r}
\address{E\"otv\"os Lor\'and University, Department of Logic, M\'{u}zeum krt. 4, 1088, Budapest, Hungary}
\email{mozaag@gmail.com}
\author{Alexa Gopaulsingh}
\address{E\"otv\"os Lor\'and University, Department of Logic, M\'{u}zeum krt. 4, 1088, Budapest, Hungary}
\email{alexa279e@gmail.com}
\date{}
\subjclass[2020]{05C15, 05C25.}
\keywords{list-distinguishing chromatic number, distinguishing proper coloring, list coloring}
\begin{document}
\begin{abstract}
In this paper, all results apply only to finite graphs.
Let $G$ be a simple connected finite graph with $n$ vertices and maximum degree $\Delta(G)$.
We show that the list-distinguishing chromatic number $\chi_{D_{L}}(G)$ of $G$ is at most $2\Delta(G)-1$, and it is $2\Delta(G)-1$ if $G$ is a complete bipartite graph $K_{\Delta(G),\Delta(G)}$ or a cycle with six vertices.
We apply a result of Lov\'{a}sz to reduce the above-mentioned upper bound of $\chi_{D_{L}}(G)$ for certain graphs. 
We also show that if $H$ is a connected unicyclic graph of girth of at least seven and $\Delta(H)\geq 3$, then $\chi_{D_{L}}(H)$ is at most $\Delta(H)$.
Moreover, we obtain two upper bounds for $\chi_{D_{L}}(G)$ in terms of the coloring number of $G$ and the list chromatic number of $G$.
We also determine the list-distinguishing chromatic number for some special graphs.
\end{abstract}

\maketitle
\section{Introduction}
All graphs considered in this paper are simple, finite, and connected, and all our results apply only to finite graphs. In \cite{AC1996}, Albertson and Collins studied the distinguishing number of a graph, and Collins and Trenk \cite{CT2006} introduced the distinguishing chromatic number of a graph. 
A coloring $h:V_{G}\rightarrow\{1,...,r\}$ of the vertices of a graph $G=(V_{G}, E_{G})$ is {\em $r$-distinguishing} provided no nontrivial automorphism of $G$ preserves all of the colors of the vertices. The {\em distinguishing number} of $G$, denoted by $D(G)$, is the minimum integer $r$ such that $G$ has an $r$-distinguishing coloring. The coloring $h$ is {\em $r$-proper distinguishing} provided $h$ is $r$-distinguishing and a proper coloring. The {\em distinguishing chromatic number} of $G$, denoted by $\chi_{D}(G)$, is the minimum integer $r$ such that $G$ has an $r$-proper distinguishing coloring.
Collins and Trenk \cite{CT2006} obtained a Brooks' Theorem type upper bound for $\chi_{D}(G)$.

\begin{thm}\label{Theorem 1.1}{(Collins and Trenk \cite[Theorem 4.5]{CT2006})}
{\em Let $G$ be a connected graph. Then $\chi_{D}(G)\leq 2\Delta(G)-1$ unless $G$ is either the complete bipartite graph $K_{\Delta(G),\Delta(G)}$, or the cycle graph $C_{6}$. In these cases, $\chi_{D}(G) = 2\Delta(G)$}.    
\end{thm}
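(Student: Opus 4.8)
The plan is to read Theorem~\ref{Theorem 1.1} as a distinguishing analogue of Brooks' theorem and to isolate the two exceptional graphs before attacking the generic bound. For $K_{\Delta,\Delta}$, with $\Delta=\Delta(G)$, observe that in any proper colouring the two sides of the bipartition receive disjoint colour sets; with only $2\Delta-1$ colours available, one side uses at most $\Delta-1$ colours, so by the pigeonhole principle it repeats a colour on two of its vertices, and transposing those two vertices is then a nontrivial colour-preserving automorphism; hence $\chi_{D}(K_{\Delta,\Delta})\ge 2\Delta$, while giving each side its own block of $\Delta$ pairwise distinct colours is a proper colouring admitting no colour-preserving symmetry (within each side the colours are distinct, and the two sides cannot be interchanged since their colour sets are disjoint), so $\chi_{D}(K_{\Delta,\Delta})=2\Delta$. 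When $\Delta=1$ this is $K_2=K_{1,1}$. For $C_6$, where $\Delta=2$ and $2\Delta-1=3$, I would enumerate the proper $3$-colourings of the $6$-cycle: reading the colours around the cycle, if some three consecutive vertices ever carry three distinct colours, a short finite check shows the colouring is either periodic (hence fixed by a nontrivial rotation) or invariant under a reflection through a pair of antipodal vertices; and if no three consecutive vertices carry three colours, then $c_i=c_{i+2}$ for all $i$, the colouring is a proper $2$-colouring, and the rotation by two steps preserves it. Thus no proper $3$-colouring of $C_6$ is distinguishing, while the proper $4$-colouring $1,2,3,4,1,2$ (in which the colours $3$ and $4$ appear only once, on two adjacent vertices, so any colour-preserving automorphism would fix two adjacent vertices and hence be the identity) gives $\chi_{D}(C_6)=4$.

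For the generic inequality, let $G$ be connected and distinct from $K_{\Delta,\Delta}$ and $C_6$. If $\Delta\le 2$, then, apart from the already excluded $K_{1,1}=K_2$ and $C_4=K_{2,2}$, $G$ is $K_1$, a path, or a cycle $C_n$ with $n\notin\{4,6\}$, and in each case a proper colouring with at most $3=2\Delta-1$ colours that no automorphism preserves is easy to write down directly. So assume $\Delta\ge 3$. By Brooks' theorem, either $G=K_{\Delta+1}$, in which case $\chi_{D}(K_{\Delta+1})=\Delta+1\le 2\Delta-1$ since $\Delta\ge 2$; or $\chi(G)\le\Delta$, and I fix a proper colouring $c\colon V(G)\to\{1,\dots,\Delta\}$, keeping the $\Delta-1$ colours $\Delta+1,\dots,2\Delta-1$ in reserve.

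The heart of the argument is to spend the $\Delta-1$ reserve colours to turn $c$ into a distinguishing colouring. The mechanism I would use: choose a vertex set $S$, recolour each vertex of $S$ with its own private reserve colour, and keep $c$ on $V(G)\setminus S$. The new colouring is still proper because the reserve colours did not occur in $G$, and any automorphism preserving it must fix every vertex of $S$ (each is the unique vertex of its colour) and must also preserve $c$, hence lies in the pointwise stabiliser of $S$ inside the colour-preserving automorphism group $\mathrm{Aut}(G,c)$; so the new colouring is distinguishing exactly when $S$ is a \emph{determining set} for $\mathrm{Aut}(G,c)$, meaning only the identity of $\mathrm{Aut}(G,c)$ fixes $S$ pointwise. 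It therefore suffices to choose the proper $\Delta$-colouring $c$ and a set $S$ with $|S|\le\Delta-1$ that is such a determining set. The step I expect to be the main obstacle is proving that such a $c$ and $S$ always exist under our hypotheses: one wants to pick the $\Delta$-colouring so that the surviving symmetry is small — spreading each colour class over as many of the $\Delta$ colours as the structure allows — and then to extract a determining set of size at most $\Delta-1$, which I would attempt along a BFS tree of $G$ with local surgery at the places where symmetry concentrates. This is precisely where $K_{\Delta,\Delta}$ cannot be accommodated: with fewer than $2\Delta$ colours some colour must repeat on one side, producing a transposition that can be destroyed only by introducing yet another fresh colour, and none is left; the $\Delta=2$ manifestation of the same obstruction is $C_6$ (together with $C_4=K_{2,2}$), which is why these are the only exceptions.
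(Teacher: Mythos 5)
Your handling of the exceptional graphs is correct: the disjoint-colour-sets/pigeonhole argument for $K_{\Delta,\Delta}$ and the finite check for $C_6$ are both sound, as is the reduction of the generic case to $\Delta\ge 3$ and $G\neq K_{\Delta+1}$ via Brooks' theorem. But the proof stops exactly where the theorem begins. Your plan is: fix a proper $\Delta$-colouring $c$, then find a set $S$ with $|S|\le\Delta-1$ that is a determining set for $\mathrm{Aut}(G,c)$, and spend the reserve colours on $S$. You state yourself that proving such a $c$ and $S$ always exist is ``the main obstacle'' and offer only a sketch of intent (``BFS tree \dots local surgery where symmetry concentrates''). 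That existence claim \emph{is} the content of the theorem; without it nothing beyond the exceptional cases has been established. It is also not a routine step: the choice of $c$ genuinely matters. For example, take $H=K_{n,n}$ minus a perfect matching ($\Delta(H)=n-1$). With the obvious proper $2$-colouring, $\mathrm{Aut}(H,c)\cong S_n$ acting diagonally on the matched pairs, and its smallest determining set has size $n-1=\Delta(H)$, which already exceeds your budget of $\Delta-1$ reserve colours; one must first perturb $c$ (e.g.\ give one vertex a third colour) before the determining-set bound comes within reach. Your proposal contains no mechanism guaranteeing that such a perturbation is always available, nor an argument that the failure modes are confined to $K_{\Delta,\Delta}$ and $C_6$.

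For comparison: the paper does not reprove Theorem \ref{Theorem 1.1} at all — it is quoted from Collins and Trenk — but it does prove the stronger list version (Theorem \ref{Theorem 3.1}, which implies Theorem \ref{Theorem 1.1} since $\chi_D(G)\le\chi_{D_L}(G)$), and the proof there takes a different shape from your plan. Rather than separating the task into ``proper colouring first, symmetry-breaking second,'' it colours greedily along a BFS order from a maximum-degree root $v$, arranging that $v$ is the unique vertex with a specific colour-plus-neighbourhood property $(\ast)$, and then performs local recolourings (the case analysis on siblings and neighbours) whenever another vertex accidentally acquires $(\ast)$; properness and distinguishability are maintained simultaneously, and the hypothesis $G\neq K_{\Delta,\Delta}$ is invoked at one precise point (Subcase 3.2) to guarantee a repair is possible. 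If you want to complete your argument, you will need either to supply a proof of your determining-set claim (with the freedom to modify $c$) or to switch to an interleaved greedy construction of this kind.
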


Recently, Ferrara et al. \cite{FGHSW2013} extended the notion of distinguishing proper coloring to a list distinguishing proper coloring. Given an assignment $L = {L(v)}_{v\in V_{G}}$ of lists of available colors to the vertices of $G$, we say that $G$ is {\em (properly) L-distinguishable} if there is a (proper) distinguishing coloring $f$ of $G$ such that $f(v) \in L(v)$ for all $v\in V_{G}$. The {\em list-distinguishing number}
of $G$, denoted by $D_{L}(G)$, is the minimum integer $k$ such that $G$ is $L$-distinguishable for any list assignment $L$ with $\vert L(v)\vert = k$ for all $v\in V_{G}$. Similarly, the {\em list-distinguishing chromatic number} of $G$, denoted by $\chi_{D_{L}}(G)$, is the minimum integer $k$ such that $G$ is properly $L$-distinguishable for any list
assignment $L$ with $\vert L(v)\vert = k$ for all $v\in V_{G}$. If $\chi(G)$ is the chromatic number of $G$, then the following holds:

\begin{center}
$\chi(G)\leq max\{\chi(G), D(G)\} \leq \chi_{D}(G) \leq \chi_{D_{L}}(G)$.
\end{center}

The second inequality was stated in \cite{CT2006}. Since all lists can be identical, the third inequality holds.

\subsection{Motivation}
Our next example shows that $\chi_{D_L}(G)\neq \chi_D(G)$ in general. This motivates the study of list-distinguishing chromatic numbers.
Consider the graph $G$ given in Figure \ref{Figure 1}. Since $G$ is bipartite and asymmetric, we have $\chi_D(G) = 2$. Consider the following assignment of lists: $L(v_0) =\{1,2\}$, $L(v_1) = \{1,3\}$, $L(v_2) = \{2,3\}$, $L(v_3) = \{1,4\}$, $L(v_4) = \{2,4\}$, $L(v_5) =\{3,4\}$, and for $0\leq i \leq 6$, $L(s_i)$ can be an arbitrary set of two elements. Then there is no proper coloring of $G$ from $\{L(v)\}_{v\in V_G}$, since the set $\{v_0,\dots, v_5\}$ does not have a proper coloring from the given lists. Thus, $\chi_{D_L}(G) \neq 2$. Moreover, $\chi_{D_L}(G) = 3$.

\begin{figure}[!ht]
\begin{minipage}{\textwidth}
\centering
\begin{tikzpicture}[scale=0.7]

\draw[black,] (0,0) -- (-3,-2); 
\draw[black,] (0,0) -- (-1,-2);
\draw[black,] (0,0) -- (1,-2);
\draw[black,] (0,0) -- (3,-2); \draw[black,] (3,-2) -- (4,-2); \draw[black,] (4,-2) -- (5,-2); 
\draw[black,] (-1,-2) -- (-2,-2); 

\draw[black,] (0,-4) -- (-3,-2);
\draw[black,] (0,-4) -- (-1,-2);
\draw[black,] (0,-4) -- (1,-2);
\draw[black,] (0,-4) -- (3,-2);
\draw[black,] (0,-4) -- (0,-5);

\draw[black,] (-6,-2) -- (-5,-2);
\draw[black,] (-5,-2) -- (-4,-2);
\draw[black,] (-4,-2) -- (-3,-2);

\node[circ] at (0,0) {$v_0$};

\node[circ] at (-3,-2) {$v_1$};
\node[circ] at (-4,-2) {$s_{2}$};
\node[circ] at (-5,-2) {$s_{1}$};
\node[circ] at (-6,-2) {$s_{0}$};

\node[circ] at (-1,-2) {$v_2$};
\node[circ] at (-2,-2) {$s_3$};

\node[circ] at (1,-2) {$v_3$};
\node[circ] at (3,-2) {$v_4$};
\node[circ] at (4,-2) {$s_4$};
\node[circ] at (5,-2) {$s_5$};

\node[circ] at (0,-4) {$v_5$};
\node[circ] at (0,-5) {$s_6$};
\end{tikzpicture}    

\end{minipage}
\caption{\em Graph $G$ where $\chi_{D_L}(G)\neq \chi_D(G)$}
\label{Figure 1}
\end{figure}
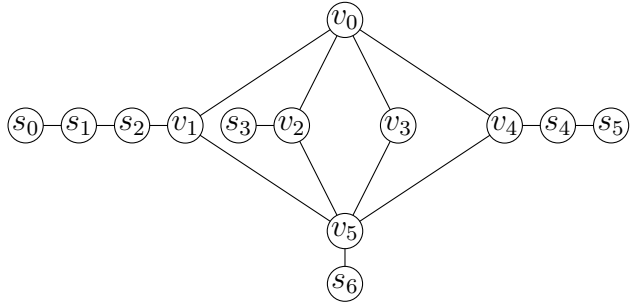

\subsection{Structure of the paper}
In Section 3, inspired by Theorem \ref{Theorem 1.1}, the first two authors prove that if $G$ is connected, then $\chi_{D_L}(G)\leq 2\Delta(G)-1$, and the equality holds if $G$ is $K_{\Delta(G),\Delta(G)}$ or $C_{6}$. In view of the graph in Figure \ref{Figure 1}, this strengthens Theorem \ref{Theorem 1.1}.
We remark that $\chi_{D_L}(G)\leq \Delta(G)$ if $G$ is a connected unicyclic graph of girth at least 7 and $\Delta(G)\geq 3$ (see Theorem \ref{Theorem 3.1}, Remark \ref{Remark 3.2}).
    
\qquad \qquad Borodin and Kostochka \cite{BK1977}, Catlin \cite{Cat1978}, and Lawrence \cite{Law1978} independently improved the Brooks-upper bounds for $\chi (G)$ for graphs omitting small cliques by applying a result of Lov\'{a}sz from \cite{Lov1966}.
In particular, they proved that if $K_{r} \not\subseteq G$, where $4 \leq r \leq \Delta(G)+1$, then $\chi(G) \leq \frac{r-1}{r}(\Delta(G) + 2)$.
In Section 4, the first two authors work in a similar fashion to reduce the upper bound of $\chi_{D_{L}}(G)$ from Theorem \ref{Theorem 3.1} for certain graphs by applying the same result of Lov\'{a}sz. In particular, we show that if $G$ is $(n-r+1)$-connected and does not contain a complete bipartite graph $K_{r-1,r-1}$ where $7\leq r\leq \Delta(G)+1$, then $\chi_{D_{L}}(G)\leq 2\Delta(G)- (3\lfloor\frac{(\Delta(G)+1)}{r}\rfloor-2)$ (see Theorem \ref{Theorem 4.3}).

\qquad \qquad In Section 5, we prove two upper bounds for $\chi_{D_{L}}(G)$ in terms of the coloring number $Col(G)$ and the list chromatic number $\chi_{L}(G)$ (see Theorem \ref{Theorem 5.1}):
\begin{enumerate}
    \item $\chi_{D_{L}}(G)\leq Col(G)D_{L}(G)$.
    
    \item If $Aut(G)\cong \Sigma$ where $\Sigma$ is a finite abelian group so that $Aut(G)\cong \prod_{1\leq i\leq k} \mathbb{Z}_{p_{i}^{n_{i}}}$ for some $k$, where $p_{1},...,p_{k}$ are primes not necessarily distinct, then $\chi_{D_{L}}(G)\leq \chi_{L}(G)+k$.
\end{enumerate}

In Theorem \ref{Theorem 5.1}, we also give new examples to show that the result mentioned in (2) is sharp.
We also determine the list-distinguishing chromatic number of the book graphs.

\section{Connected graphs with maximum degree two}

Let $P_{n}$ be a path of $n$
vertices, $C_{n}$ be a cycle of $n$ vertices, $K_{n}$ be a complete graph of $n$ vertices, and $K_{n,m}$ be a complete bipartite graph with bipartitions of size $n$ and $m$.
\begin{fact}\label{Fact:2.1}
{\em The following holds:
\begin{enumerate}
    \item If $T$ is a tree, then $\chi_{D}(T) = \chi_{D_{L}}(T)$ (see \cite[Theorem 7]{FGHSW2013}).
    \item If $T$ is a tree, then $\chi_{D}(T) \leq \Delta(T) + 1$ (see \cite[Theorem 3.4]{CT2006}).
    
    \item If $G$ is a graph, then $\chi_{D}(G) \leq \chi_{D_{L}}(G)$.
    
    \item $\chi_D(P_{2t})=2 $, $\chi_D(P_{2t+1})=3 $, $\chi_D(C_{4})=4 $, $\chi_D(C_{5})=3 $, $\chi_D(C_{6})=4 $, $\chi_D(C_{2n})=3 $, $\chi_D(C_{2m+1})=3 $ for any $t\geq 1$, $n\geq 4 $, and $m\geq 3$ (see \cite[Theorem 2.2]{CT2006}). 
\end{enumerate}
}
\end{fact}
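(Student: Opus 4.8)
The plan is to treat the five assertions separately, since they are essentially independent. Assertion (3), $\chi_D(G)\le\chi_{D_L}(G)$, is immediate: take the constant list assignment $L(v)=\{1,\dots,\chi_{D_L}(G)\}$ for every vertex $v$; by definition $G$ has a proper distinguishing coloring from these lists, and such a coloring witnesses $\chi_D(G)\le\chi_{D_L}(G)$. (This is also just the third inequality in the displayed chain of the introduction.) For the remaining parts I would reconstruct the standard arguments as follows.

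For assertion (2), $\chi_D(T)\le\Delta(T)+1$ for a tree $T$, I would root $T$ at its center (a single vertex, or a single edge with both endpoints treated as co-roots), so that every automorphism of $T$ fixes the center setwise. Then I would process vertices in order of increasing distance from the root and color greedily subject to two constraints at each non-root vertex $v$: its color must differ from that of its parent (properness), and among the children of any fixed vertex, any two rooting isomorphic subtrees must get distinct colors. A non-root vertex has at most $\Delta(T)-1$ children, so at most $(\Delta(T)-1)+1=\Delta(T)$ colors are forbidden and a legal choice remains from $\Delta(T)+1$; the root has at most $\Delta(T)$ children and no parent, so $\Delta(T)$ colors suffice there. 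A downward induction on depth then shows the coloring is distinguishing: an automorphism fixing the center must, level by level, map each vertex to one of the same color rooting an isomorphic subtree, and the ``distinct colors on isomorphic sibling subtrees'' rule forces it to fix each such vertex.

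For assertion (1), $\chi_D(T)=\chi_{D_L}(T)$: since $\chi_D(T)\le\chi_{D_L}(T)$ always holds, it suffices to show $\chi_{D_L}(T)\le k$ with $k:=\chi_D(T)$. Given any list assignment $L$ with $|L(v)|=k$, I would run the rooted procedure above, but now requiring properness and the sibling-distinctness condition to be met inside the lists. Following Ferrara et al., the efficient route is an induction on rooted subtrees proving a strengthened statement: for each vertex $v$ and each admissible $c\in L(v)$ there is a proper sibling-distinguishing $L$-coloring of the subtree $T_v$ with $v$ colored $c$. The step at $v$ amounts, for each block of children rooting pairwise isomorphic subtrees, to choosing pairwise distinct colors from their (possibly differing) lists avoiding the color of $v$ — a system of distinct representatives, available by Hall's theorem because the number of such siblings is at most the maximum multiplicity of isomorphic subtrees at a vertex, which is exactly what $k=\chi_D(T)$ already dominates. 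The main obstacle is precisely this bookkeeping: verifying that the list sizes $k$ never run short once the siblings' lists are no longer identical.

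Assertion (4) I would prove by direct computation. For paths, $P_n$ ($n\ge2$) is bipartite with an essentially unique proper $2$-coloring, and $\mathrm{Aut}(P_n)\cong\mathbb{Z}_2$ is generated by the reflection: when $n=2t$ the reflection interchanges the two color classes, so the $2$-coloring is already distinguishing and $\chi_D=2$; when $n=2t+1$ the reflection fixes the central vertex and preserves both classes, so two colors fail and recoloring one vertex with a third color gives $\chi_D=3$. For cycles one argues analogously with $\mathrm{Aut}(C_n)\cong D_n$: for $n=5$ and $n\ge7$ a mostly-alternating proper pattern with a short local ``defect'' is fixed by no rotation or reflection, giving $\chi_D=3$, while $C_4=K_{2,2}$ and $C_6$ are checked by hand to require $4$ colors (every proper $3$-coloring leaves an antipodal monochromatic pair whose transposition is a color-preserving automorphism). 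None of this is hard; across the whole Fact the only genuine care is the list bookkeeping in part (1).
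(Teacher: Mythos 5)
First, note that the paper does not actually prove Fact~2.1: parts (1), (2) and (4) are quoted from Ferrara et al.\ and Collins--Trenk, and part (3) is just the third inequality of the displayed chain in the introduction (take every list equal to $\{1,\dots,\chi_{D_L}(G)\}$). So any self-contained argument is necessarily a different route. Your treatments of (2), (3) and (4) are the standard ones and are fine in outline. Two small slips: in (2) the forbidden set when coloring a non-root vertex $v$ consists of the colors of its parent and of its already-colored \emph{siblings}, not of its children, though the count still comes to at most $\Delta(T)$; and for $C_6$ the transposition of an antipodal monochromatic pair is not itself an automorphism of $C_6$ (it breaks edges), so the correct verification of $\chi_D(C_6)=4$ is the short case analysis of proper $3$-colorings, or simply the citation.

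Part (1), however, contains a genuine gap. Your strengthened induction hypothesis asks for a proper $L$-coloring in which siblings rooting isomorphic subtrees receive pairwise \emph{distinct} colors, and you justify the required system of distinct representatives by asserting that the number of pairwise isomorphic sibling subtrees at a vertex is dominated by $k=\chi_D(T)$. That is false. Consider the spider $T$ with a center $c$ joined to $m$ paths of length $2$, say $c$--$a_i$--$b_i$ for $1\le i\le m$: the $a_i$ root $m$ pairwise isomorphic subtrees, yet a proper distinguishing coloring only needs the ordered pairs $(f(a_i),f(b_i))$ to be pairwise distinct subject to $f(a_i)\neq f(c)$ and $f(a_i)\neq f(b_i)$, so $\chi_D(T)=1+\lceil\sqrt{m}\,\rceil$, far below $m+1$. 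With lists of size $k=\chi_D(T)$ one cannot give the $a_i$ pairwise distinct colors, so Hall's theorem does not apply and your induction hypothesis is unattainable at the optimal value of $k$. The proof of Ferrara et al.\ instead distinguishes the \emph{colored rooted subtrees} hanging at isomorphic siblings rather than their root colors: the induction on height shows that the number of non-isomorphic properly $L$-colored rooted subtrees realizable at each child is at least the number of its isomorphic siblings, and this is exactly where the equality $\chi_{D_L}(T)=\chi_D(T)$, as opposed to a weaker bound such as $\Delta(T)+1$, comes from. As written, your argument for (1) would only recover a bound of the latter kind.
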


\begin{prop}\label{Proposition 2.2}
{\em $\chi_{D_{L}}(P_{2t})=2$, $\chi_{D_{L}}(P_{2t+1})=3$, and $\chi_{D_{L}}(C_{4})=4$ for any $t\geq 1$.
}
\end{prop}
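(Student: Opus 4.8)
The plan is to establish matching upper and lower bounds for each of the three graph classes. For the upper bounds, I would invoke the chain of inequalities $\chi_{D_L}(G) \le 2\Delta(G)-1$ promised in Section~3 (Theorem~\ref{Theorem 3.1}) together with the elementary fact that $\chi_{D_L}(G)$ never drops below $\chi_D(G)$ (Fact~\ref{Fact:2.1}(3)); but it is cleaner here to argue directly since these graphs are so small. For $P_{2t}$ and $P_{2t+1}$, which are trees, Fact~\ref{Fact:2.1}(1) tells us $\chi_{D_L}$ coincides with $\chi_D$, and Fact~\ref{Fact:2.1}(4) gives $\chi_D(P_{2t})=2$ and $\chi_D(P_{2t+1})=3$, so those two equalities are immediate. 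The only genuine work is $C_4$.

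For $C_4$, the lower bound $\chi_{D_L}(C_4)\ge \chi_D(C_4)=4$ follows from Fact~\ref{Fact:2.1}(3) and Fact~\ref{Fact:2.1}(4). For the upper bound $\chi_{D_L}(C_4)\le 4$, I would take an arbitrary list assignment $L$ with $|L(v)|=4$ for each of the four vertices $v_1v_2v_3v_4$ (in cyclic order) and produce a proper distinguishing coloring. First pick any proper list coloring $f$ (which exists since $\chi_L(C_4)\le \Delta+1=3\le 4$, or directly by a greedy argument around the $4$-cycle with lists of size $4$); then I need to break the automorphism group of $C_4$, which is the dihedral group $D_4$ of order $8$. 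A proper coloring of $C_4$ with colors $f(v_1),f(v_2),f(v_3),f(v_4)$ is distinguishing precisely when the color pattern around the cycle is not fixed by any nontrivial symmetry, i.e. it is not of the form $abab$ (fixed by rotations and reflections) and not "palindromic" in the relevant senses; a proper coloring uses at least two colors, and the bad patterns to avoid are $abab$ (a proper $2$-coloring) and those with a reflective symmetry such as $abcb$ or $abca$. The cleanest sufficient condition is to use four \emph{distinct} colors, or failing that, three distinct colors arranged so that the repeated color sits in a pattern with no reflection axis — but on $C_4$ any proper $3$-coloring has shape $abcb$ up to symmetry, which \emph{is} fixed by a reflection, so one must use all four colors distinct. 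Hence the real claim is: from lists of size $4$ on $C_4$ one can always choose four pairwise distinct colors, one per vertex, forming a proper (automatically, since all distinct) coloring.

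So the crux reduces to a small rainbow-type selection statement: given four sets $L(v_1),\dots,L(v_4)$ each of size $4$, there is a system of distinct representatives. This is immediate from Hall's theorem — any union of $j$ of the lists has size at least $4\ge j$ for $j\le 4$ — so an SDR exists, giving four distinct colors, hence a proper and trivially distinguishing coloring. Combining with the lower bound yields $\chi_{D_L}(C_4)=4$. I expect the only subtlety, and the step I would write most carefully, is the verification that no proper list coloring of $C_4$ using \emph{fewer} than four colors can be distinguishing (so that "use four distinct colors" is not merely sufficient but essentially forced), and dually that we have enough room — lists of size $4$ — to guarantee the rainbow selection; everything else is bookkeeping with the cited facts.
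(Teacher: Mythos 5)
Your proposal is correct and follows essentially the same route as the paper: the path cases are dispatched by Fact \ref{Fact:2.1}(1),(4), and $\chi_{D_L}(C_4)=4$ comes from the lower bound $\chi_D(C_4)=4$ together with the upper bound $\chi_{D_L}(C_4)\leq 4$, which the paper simply asserts and you justify via the (correct, if slightly over-elaborated) observation that four lists of size four admit a system of distinct representatives, yielding a rainbow, hence proper and distinguishing, coloring.
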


\begin{proof}
By Fact \ref{Fact:2.1}(1,4), we have $\chi_{D_{L}}(P_{2t})=\chi_{D}(P_{2t})=2$ and $\chi_{D_{L}}(P_{2t+1})=\chi_{D}(P_{2t+1})=3$.
Since $4 = \chi_D(C_4)\leq \chi_{D_L}(C_4)\leq 4$, we have $\chi_{D_L}(C_4)= 4$.
\end{proof}

\begin{prop}\label{Proposition 2.3}
{\em The following holds:
\begin{enumerate}
    \item If $L=\{L(v)\}_{v\in V_{C_{5}}}$ is an assignment of lists of size $3$ to  $V_{C_{5}}$, and $\vert \bigcup_{v\in V_{C_{5}}} L(v)\vert \neq 3$, then $C_{5}$ is properly $L$-distinguishable.
    
    \item If $L=\{L(v)\}_{v\in V_{C_{6}}}$ is an assignment of lists of size $4$ to  $V_{C_{6}}$, and $\vert \bigcup_{v\in V_{C_{6}}} L(v)\vert \neq 4$, then $C_{6}$ is properly $L$-distinguishable.
\end{enumerate}
}
\end{prop}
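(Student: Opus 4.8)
Throughout write $U=\bigcup_{v}L(v)$, let $n\in\{5,6\}$ be the number of vertices and $k=n-2\in\{3,4\}$ the common list size; since $|U|\ge k$ automatically, the hypothesis $|U|\neq k$ just says $|U|\ge k+1=n-1$. The plan is, in both cases, to produce a proper $L$-coloring of $C_n$ using at least $n-1$ distinct colors, and then to observe that such a coloring is automatically distinguishing. For the latter: a color-preserving automorphism $g$ of $C_n$ is constant on the orbits of $\langle g\rangle$, so it can preserve a coloring with $m$ colors only when $\langle g\rangle$ has at least $m$ orbits on the vertex set. A nontrivial rotation of $C_5$ acts transitively ($1$ orbit) and a reflection of $C_5$ has $3$ orbits, so every nontrivial element of $Aut(C_5)$ has at most $3$ orbits; likewise every nontrivial element of $Aut(C_6)$ has at most $4$ orbits, the value $4$ being attained only by the reflections through two antipodal vertices --- this is exactly why $C_6$ needs one more color than $C_5$. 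Hence a proper $4$-coloring of $C_5$, or a proper $5$-coloring of $C_6$, is distinguishing.

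To build such a coloring I would first find a vertex $w$ with $\bigl|\bigcup_{v\ne w}L(v)\bigr|\ge n-1$. If there were no such $w$, then fixing any vertex $w$ the union of the other $n-1$ lists has size $\le k$ and contains each of those lists (each of size $k$), forcing all $n-1$ of them to equal one common $k$-set $A$; applying this at a second vertex $w'$ (using $n\ge 3$, so that a list equal to $A$ also appears among those indexed by $v\ne w'$) then forces $L(w)\subseteq A$, hence $L(w)=A$. Then every list equals $A$ and $|U|=k$, contradicting the hypothesis. This is the only step that genuinely uses $|U|\neq k$.

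Fix such a $w$. Then $C_n-w$ is a path on $n-1=k+1$ vertices. In the bipartite membership graph between $V(C_n)\setminus\{w\}$ and the color set (with $v$ adjacent to every color of $L(v)$), Hall's condition holds: any $t\le k$ of the lists have union of size $\ge k\ge t$, while all $k+1$ of them have union of size $\ge n-1=k+1$ by the choice of $w$. A system of distinct representatives thus assigns $n-1$ pairwise distinct admissible colors to the $n-1$ vertices of $C_n-w$; being rainbow, this is automatically a proper coloring of that path. Finally color $w$ itself: it has only two neighbors, so at least $k-2\ge 1$ colors of $L(w)$ remain available. The resulting proper $L$-coloring of $C_n$ uses at least $n-1$ colors, hence is distinguishing by the first paragraph, which proves (1) and (2) at once.

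The argument is light once one hits on the idea of forcing a near-rainbow coloring and invoking Hall's theorem; the two spots that deserve care are the orbit count for $Aut(C_6)$ --- one must check that a reflection of $C_6$ through two antipodal vertices has exactly four orbits, so that a proper $4$-coloring might still be symmetric while a $5$-coloring cannot --- and the degenerate configuration in the second paragraph, where all but at most one of the lists nearly coincide; the hypothesis $|U|\neq k$ is precisely what excludes that configuration.
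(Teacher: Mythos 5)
Your proof is correct, and it takes a genuinely different route from the paper's. The paper proceeds by direct construction: it picks pairwise distinct representatives $c_0,\dots,c_{n-3}$ for $n-2$ consecutive vertices (possible for any lists of size $n-2$) and then colors the remaining two vertices properly, adding one extra exclusion (e.g.\ $f(v_3)\notin\{c_1,c_2\}$ on $C_5$, and $f(v_5)\neq c_3$ on $C_6$) whose only role is to break the one reflection not already destroyed by the rainbow initial segment; the check that no nontrivial automorphism survives is left implicit, and the hypothesis $\vert\bigcup_{v}L(v)\vert\neq k$ is in fact never invoked in that construction. You instead isolate a clean sufficient condition --- a proper coloring of $C_n$ using more colors than the maximal number of orbits of a nontrivial automorphism ($3$ for $C_5$, $4$ for $C_6$) is automatically distinguishing --- and then manufacture such a coloring by deleting a well-chosen vertex $w$ and applying Hall's theorem to get a rainbow proper list coloring of the remaining path; the orbit counts you quote for the dihedral groups of $C_5$ and $C_6$ are correct, as is the degenerate-configuration argument producing $w$. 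The hypothesis enters your proof exactly once, to guarantee a vertex $w$ with $\vert\bigcup_{v\neq w}L(v)\vert\geq n-1$, so your argument also explains what the hypothesis is for, which the paper's does not. What the paper's approach buys is elementarity (a short explicit recipe, no Hall's theorem, no orbit counting); what yours buys is a conceptual separation of ``proper'' from ``distinguishing'' and a single uniform argument covering both cases without inspecting individual reflections.
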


\begin{proof}
Fix $k\in\{5,6\}$. Let $v_{0},...,v_{k-1}$ be an enumeration of $V_{C_{k}}$ in clockwise order. We define a proper distinguishing coloring $f$ of $C_{k}$ 
in each case so that $f(v_{i})\in L(v_{i})$ for all $0\leq i\leq k-1$.

(1). For each $0\leq i\leq 2$, pick $c_i\in L(v_i)$ such that $c_{0}, c_{1},$ and $c_{2}$ are pairwise distinct. Let, 
$f(v_{i})=c_{i}$, $f(v_3)\in L(v_3)\setminus \{c_1,c_2\}$, and $f(v_4)\in L(v_4)\setminus\{c_0, f(v_3)\}$.     

(2). For each $0\leq i\leq 3$, pick $c_i\in L(v_i)$ such that $c_{0}, c_{1}, c_{2},$ and $c_{3}$ are pairwise distinct. Let,
   $f(v_{i})=c_{i}$, $f(v_4)\in L(v_4)\setminus \{c_1,c_2,c_3\}$, and $f(v_5)\in L(v_5)\setminus\{c_0, f(v_4), c_3\}$.  
\end{proof}

\begin{prop}\label{Proposition 2.4}
{\em Fix $n\geq 3$. Let $L=\{L(v)\}_{v\in V_{C_{2n+1}}}$ be an assignment of lists of size $3$ to $V_{C_{2n+1}}$, and $\vert \bigcup_{v\in V_{C_{2n+1}}} L(v)\vert \neq 3$, then $C_{2n+1}$ is properly $L$-distinguishable.}
\end{prop}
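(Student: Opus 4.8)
The plan is to exhibit, for an arbitrary list assignment $L$ as in the hypothesis, a proper coloring $f$ with $f(v)\in L(v)$ for all $v$ in which some color is used by exactly one vertex; this single feature already forces $f$ to be distinguishing. Write the vertices as $v_0,\dots,v_{2n}$ in cyclic order and compute indices modulo $2n+1$. The structural point I would isolate first is that every nontrivial automorphism of $C_{2n+1}$ is a rotation or a reflection, and the reflection fixing $v_i$ sends $v_{i+k}$ to $v_{i-k}$ for all $k$; in particular it swaps the adjacent vertices $v_{i+n}$ and $v_{i+n+1}$, since $i-n\equiv i+n+1$. Hence no reflection preserves any proper coloring, and a nontrivial rotation, having no fixed vertex, cannot preserve a coloring in which some color occurs exactly once. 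So it suffices to construct such an $f$.

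For the construction I would fix any color $c$ lying in some list, say $c\in L(v_i)$, set $f(v_i)=c$, and for every vertex $v_j$ with $j\ne i$ replace its list by $L'(v_j)=L(v_j)\setminus\{c\}$, which still has size at least $2$ because $|L(v_j)|=3$. Removing $v_i$ leaves the path $v_{i+1}-v_{i+2}-\cdots-v_{i-1}$ on $2n$ vertices, and since all the lists $L'(v_j)$ have size at least $2$, a greedy sweep along the path (color $v_{i+1}$ from $L'(v_{i+1})$, then each subsequent vertex from its $L'$-list while avoiding the color just used on its predecessor) yields a proper coloring with $f(v_j)\in L'(v_j)$. The two cycle-edges incident to $v_i$ cause no obstruction, since $f(v_{i+1}),f(v_{i-1})\ne c=f(v_i)$; thus $f$ is a proper coloring of $C_{2n+1}$, and $c$ is used only at $v_i$.

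It then remains to check that $f$ is distinguishing. Suppose $\varphi\in \mathrm{Aut}(C_{2n+1})$ preserves every color of $f$. Then $f(\varphi(v_i))=c$, so $\varphi(v_i)=v_i$; the stabilizer of a vertex in the dihedral group $\mathrm{Aut}(C_{2n+1})$ has order $2$ and consists of the identity and the reflection through $v_i$, which swaps $v_{i+n}$ and $v_{i+n+1}$. These two vertices receive different colors because $f$ is proper, so this reflection does not preserve $f$. Hence $\varphi$ is the identity, $f$ is a proper distinguishing coloring, and $C_{2n+1}$ is properly $L$-distinguishable.

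I do not expect a genuine obstacle: the argument turns entirely on the remark that every reflection of an odd cycle reverses an edge, and everything after that is bookkeeping. The two points that need a little care are checking that the greedy coloring of the path never gets stuck — which is exactly where the hypothesis $|L(v)|=3$ is used, as it guarantees $|L'(v)|\ge 2$ — and correctly identifying the vertex stabilizer inside $\mathrm{Aut}(C_{2n+1})$. I also note that the hypothesis $\bigl|\bigcup_{v}L(v)\bigr|\ne 3$ is not needed for this particular argument; in the excluded case all lists coincide with a single $3$-element set, and the claim is then already contained in $\chi_D(C_{2n+1})=3$ (Fact \ref{Fact:2.1}(4)).
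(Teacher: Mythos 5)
Your proof is correct, and it takes a genuinely different route from the paper's. Your key observation is that for an odd cycle $C_{2n+1}$, \emph{any} proper coloring in which some color occurs exactly once is automatically distinguishing: every reflection reverses the edge antipodal to its fixed vertex, so properness alone rules out reflections, and a nontrivial rotation has no fixed vertex, so it cannot preserve a singleton color class. This reduces everything to producing a proper list coloring with one uniquely used color, which your single greedy sweep along the path $v_{i+1},\dots,v_{i-1}$ (after deleting $c$ from the other lists, leaving lists of size at least $2$) does without any case analysis. The paper instead uses the hypothesis $\vert\bigcup_v L(v)\vert\neq 3$ to find an edge $\{x,y_1\}$ with $c_x\in L(x)\setminus L(y_1)$, colors outward along both arms of the cycle while avoiding $c_x$ everywhere, and then must split into cases according to whether the two antipodal vertices $y_n,w_n$ receive the same color, recoloring $w_n$ if they clash; it also deliberately arranges $c_{y_1}\neq c_{w_1}$ to kill the reflection through $x$, a step your structural lemma makes unnecessary. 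What your approach buys is brevity and generality: as you note, it never uses the hypothesis on the union of the lists (nor really the restriction $n\geq 3$), so it proves the odd-cycle entries of Table 1 in one stroke, whereas the paper's route needs Proposition \ref{Proposition 2.4} together with the identical-lists case via Fact \ref{Fact:2.1}(4). What the paper's two-armed construction buys is that it sets up the notation and case structure reused almost verbatim in the even-cycle Proposition \ref{Proposition 2.5}, where your shortcut is unavailable (an even cycle has fixed-point-free reflections that swap non-adjacent vertices, so properness plus a singleton color class does not suffice there).
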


\begin{proof}
Since $C_{2n+1}$ is connected, there exist two vertices $x$ and $y_{1}$ in $C_{2n+1}$ such that $L(x)\neq L(y_{1})$ and $\{x,y_{1}\}\in E_{C_{2n+1}}$. We color $C_{2n+1}$ as follows:

\begin{enumerate}
\item[$(a)$] Pick any $c_{x}\in L(x)\backslash L(y_{1})$. Color $x$ with $c_{x}$. Let $N(x)$ be the set of vertices adjacent to $x$.
\item[$(b)$] Let $w_{1}\in N(x)\backslash \{y_{1}\}$. Color $w_{1}$ with any color $c_{w_{1}}$ from $L(w_{1})\backslash\{c_{x}\}$. 
\item[$(c)$] Color $y_{1}$ with any color $c_{y_{1}}$ from $L(y_{1})\backslash\{c_{w_{1}}\}$. We note that $c_{x}\not\in L(y_{1})$. Moreover, we can see that $c_{x}, c_{y_{1}}$, and $c_{w_{1}}$ are pairwise distinct.
\item[$(d)$] Let $\{y_{2},...,y_{n}, w_{n},...,w_{2}\}$ be the vertices in the path of length $2n-1$ joining $y_{1}$ and $w_{1}$ in clockwise order (see Figure \ref{Figure 2}).
\item[$(e)$] For any $2\leq k\leq n$, we color $y_{k}$ inductively with any color $c_{y_{k}}$ from $L(y_{k})\backslash\{c_{x}, c_{y_{k-1}}\}$ and color $w_{k}$ inductively with any color $c_{w_{k}}$ from $L(w_{k})\backslash\{c_{x}, c_{w_{k-1}}\}$.
\end{enumerate}
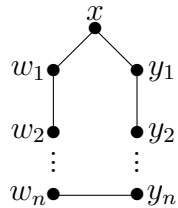
\begin{figure}[!ht]
\centering
\begin{minipage}{\textwidth}
\centering
\begin{tikzpicture}[scale=0.55]
\draw[black,] (0,-1) -- (-1,-2);
\draw[black,] (0,-1) -- (1,-2);
\draw[black,] (-1,-2) -- (-1,-3.5);
\draw[black,] (1,-2) -- (1,-3.5);
\draw[black,] (-1,-5) -- (1,-5);

\draw (0,-1) node {$\bullet$};
\draw (0,-0.6) node {$x$};
\draw (1,-2) node {$\bullet$};
\draw (1.6,-2) node {$y_{1}$};
\draw (1,-3.5) node {$\bullet$};
\draw (1.6,-3.5) node {$y_{2}$};

\draw (1,-4) node {$.$};
\draw (1,-4.2) node {$.$};
\draw (1,-4.4) node {$.$};

\draw (1,-5) node {$\bullet$};
\draw (1.6,-5) node {$y_{n}$};
\draw (-1,-5) node {$\bullet$};
\draw (-1.6,-5) node {$w_{n}$};

\draw (-1,-4) node {$.$};
\draw (-1,-4.2) node {$.$};
\draw (-1,-4.4) node {$.$};

\draw (-1,-3.5) node {$\bullet$};
\draw (-1.6,-3.5) node {$w_{2}$};
\draw (-1,-2) node {$\bullet$};
\draw (-1.6,-2) node {$w_{1}$};

\end{tikzpicture}
\end{minipage}
\caption{\em The cycle graph $C_{2n+1}$.}
\label{Figure 2}
\end{figure}
\textsc{Case 1:} If $c_{w_{n}}\neq c_{y_{n}}$, then $x$ is the only vertex to get color $c_{x}$. Since any nontrivial color-preserving automorphism fixes $x$, $w_{1}$ must map to $y_{1}$, but $c_{y_{1}}\neq c_{w_{1}}$. Thus, $C_{2n+1}$ is properly $L$-distinguishable.

\textsc{Case 2:} If $c_{w_{n}}= c_{y_{n}}$, then we recolor $w_{n}$ with any color $d_{w_{n}}$ from $L(w_{n})\backslash\{c_{y_{n}},c_{w_{n-1}}\}$. If $d_{w_{n}}\neq c_{x}$, then by the arguments of \textsc{Case 1}, we are done. If $d_{w_{n}}=c_{x}$, then $w_{n}$ is the only vertex, other than $x$, to get the color $c_{x}$. Fix any nontrivial color-preserving automorphism $\phi$. Then $\phi$ should either fix both $x$ and $w_{n}$, or map $x$ to $w_{n}$ and $w_{n}$ to $x$.
Similar to \textsc{Case 1}, $\phi$ cannot fix $x$. If $\phi$ maps $x$ to $w_{n}$, then for some $1\leq m\leq n$, we have either $\{c_{w_{m}},c_{y_{m}}\}=\{c_{w_{m}},c_{y_{m+1}}\}$ or $\{c_{w_{m}},c_{y_{m}}\}=\{c_{w_{m-1}},c_{y_{m}}\}$, which is a contradiction as $\{y_{m},y_{m+1}\}$ and $\{w_{m},w_{m-1}\}$ are edges of $C_{2n+1}$.   
\end{proof}

\begin{prop}\label{Proposition 2.5}
{\em Fix $n\geq 4$. Let $L=\{L(v)\}_{v\in V_{C_{2n}}}$ be an assignment of lists of size $3$ to $V_{C_{2n}}$, and $\vert \bigcup_{v\in V_{C_{2n}}} L(v)\vert \neq 3$, then $C_{2n}$ is properly $L$-distinguishable.}
\end{prop}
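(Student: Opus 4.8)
The plan is to follow the template of the proof of Proposition~\ref{Proposition 2.4}, with two changes forced by passing from odd to even cycles: the two greedily coloured arcs now meet at a single antipodal vertex rather than along an edge, and $Aut(C_{2n})$ contains automorphisms that interchange the two antipodal vertices, which must also be broken. Enumerate $V_{C_{2n}}=\{v_{0},v_{1},\dots,v_{2n-1}\}$ in cyclic order, and write $\sigma\colon v_{i}\mapsto v_{-i}$, $r^{n}\colon v_{i}\mapsto v_{i+n}$ and $\rho\colon v_{i}\mapsto v_{n-i}$ (indices mod $2n$) for three of the automorphisms; note that $v_{0},v_{1},v_{n-1},v_{n},v_{n+1},v_{2n-1}$ are pairwise distinct since $n\geq4$.

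First I would find an edge across which the lists differ. Since $\vert\bigcup_{v}L(v)\vert\geq4$ the lists are not all equal, and $C_{2n}$ is connected, so after relabelling I may assume $L(v_{0})\neq L(v_{1})$; as the lists have size $3$ this yields $L(v_{0})\setminus L(v_{1})\neq\emptyset$, and I colour $v_{0}$ by some $c_{0}$ in this set, so $c_{0}\notin L(v_{1})$. The choice of colour for $v_{1}$ is the one subtle point: if $c_{0}\in L(v_{n})$ then $L(v_{1})\neq L(v_{n})$ (otherwise $c_{0}\in L(v_{1})$), and I colour $v_{1}$ by some $c_{1}\in L(v_{1})\setminus L(v_{n})$; if $c_{0}\notin L(v_{n})$, I colour $v_{1}$ by an arbitrary $c_{1}\in L(v_{1})\setminus\{c_{0}\}$. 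In either case $c_{1}\neq c_{0}$. Then I colour $v_{2},\dots,v_{n-1}$ in this order and, separately, $v_{2n-1},v_{2n-2},\dots,v_{n+1}$ in this order, each time picking from the vertex's three-element list a colour that avoids both $c_{0}$ and the colour of the vertex's unique already-coloured neighbour, and additionally arranging $c_{2n-1}\neq c_{1}$. Since each of these choices forbids at most two of three colours, it is always available, and the outcome is a proper colouring of all vertices except $v_{n}$ in which $c_{0}$ occurs only at $v_{0}$, while $c_{1}\neq c_{2n-1}$.

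It remains to colour $v_{n}$, whose neighbours $v_{n-1}$ and $v_{n+1}$ are coloured $c_{n-1}$ and $c_{n+1}$. If $L(v_{n})\setminus\{c_{0},c_{n-1},c_{n+1}\}\neq\emptyset$, I colour $v_{n}$ from that set; then $c_{0}$ is the colour of $v_{0}$ alone, so any colour-preserving automorphism fixes $v_{0}$, and the only nontrivial automorphism fixing $v_{0}$ is $\sigma$, which is excluded because it would force $c_{1}=c_{2n-1}$. Otherwise $L(v_{n})=\{c_{0},c_{n-1},c_{n+1}\}$ with these three colours pairwise distinct; in particular $c_{0}\in L(v_{n})$, so $c_{1}$ was chosen in $L(v_{1})\setminus L(v_{n})$ and hence $c_{1}\notin\{c_{n-1},c_{n+1}\}$. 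I set $c_{n}=c_{0}$, the only proper choice. Then $\{v_{0},v_{n}\}$ is exactly the colour class of $c_{0}$, so a colour-preserving automorphism lies in the setwise stabiliser of $\{v_{0},v_{n}\}$, which is the four-element group $\{\mathrm{id},\sigma,r^{n},\rho\}$; but $\sigma$ would force $c_{1}=c_{2n-1}$, $r^{n}$ would force $c_{1}=c_{n+1}$, and $\rho$ would force $c_{1}=c_{n-1}$, all impossible. So in both cases the colouring is proper and distinguishing.

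The step I expect to be the main obstacle is exactly this final case, the even-cycle counterpart of Case~2 in the proof of Proposition~\ref{Proposition 2.4}: the antipodal vertex $v_{n}$ may be forced to repeat the ``private'' colour $c_{0}$, and when it does, one must still defeat the half-turn $r^{n}$ and the reflection $\rho$. The whole construction is arranged around this: taking $c_{1}$ in $L(v_{1})\setminus L(v_{n})$ --- which is possible precisely because $c_{0}\notin L(v_{1})$ forces $L(v_{1})\neq L(v_{n})$ whenever $c_{0}\in L(v_{n})$ --- makes the single colour $c_{1}$ distinguish $v_{1}$ from both $v_{n-1}$ and $v_{n+1}$, and so kills $r^{n}$ and $\rho$ simultaneously, with $\sigma$ killed as always by $c_{1}\neq c_{2n-1}$. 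The only routine points left are that each greedy choice is available (remove at most two colours from a list of three) and that the setwise stabiliser of a pair of antipodal vertices in $Aut(C_{2n})$ is the four-element group described above.
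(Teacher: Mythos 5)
Your proof is correct. It shares the paper's overall skeleton (a private colour $c_{0}$ at a vertex whose list differs from a neighbour's, two greedily coloured arcs avoiding $c_{0}$ that meet at the antipodal vertex, and a case split on whether that vertex can avoid $c_{0}$), but it resolves the hard case differently. The paper is reactive: it colours greedily with an essentially arbitrary $c_{y_{1}}$, and when the antipodal vertex $z$ is forced to take $c_{x}$ and the resulting colouring admits a nontrivial automorphism swapping $x$ and $z$, it goes back and recolours $y_{1}$ (Subcases 2.1 and 2.2), then reverifies distinguishability via a separate claim. You are proactive: by choosing $c_{1}\in L(v_{1})\setminus L(v_{n})$ whenever $c_{0}\in L(v_{n})$ (which is legitimate precisely because $c_{0}\in L(v_{0})\setminus L(v_{1})$ forces $L(v_{1})\neq L(v_{n})$ in that case), you guarantee in advance that in the bad case $L(v_{n})=\{c_{0},c_{n-1},c_{n+1}\}$ one has $c_{1}\notin\{c_{n-1},c_{n+1}\}$, so the half-turn and both reflections through the pair $\{v_{0},v_{n}\}$ are all broken by the single colour $c_{1}$ together with $c_{1}\neq c_{2n-1}$. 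This removes the need for any recoloring step and makes the group-theoretic content explicit (the four-element setwise stabiliser of an antipodal pair, which you identify correctly); the paper's version instead keeps the colour choices uniform with Proposition~2.4 at the cost of the extra repair-and-reverify claim. All your greedy steps remove at most two colours from a three-element list, and the verification that $\sigma$, $r^{n}$, $\rho$ are each excluded is sound, so the argument goes through.
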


\begin{proof}
Since $C_{2n}$ is connected, assume $x$, $y_{1}$, $w_{1}$, $c_{x}$, $c_{y_{1}}$, and $c_{w_{1}}$ as in the proof of Proposition \ref{Proposition 2.4}. The following facts will be useful for our proof:

\begin{enumerate}
    \item[$(a)$] $c_{x}\not\in L(y_{1})$,
    \item[$(b)$] $c_{x}, c_{y_{1}}$, and $c_{w_{1}}$ are pairwise distinct.
\end{enumerate}

Let $\{y_{2},...,y_{n-1}, z, w_{n-1},...,w_{2}\}$ be the vertices in the path of length $2n-2$ joining $y_{1}$ and $w_{1}$ in a clockwise order (see Figure \ref{Figure 3}). For $2\leq k\leq n-1$, assume $c_{y_{k}}$ and $c_{w_{k}}$ as in the proof of Proposition \ref{Proposition 2.4}.
Let $L'(z)= L(z)\setminus \{c_x,c_{y_{n-1}}, c_{w_{n-1}}\}$.

\begin{figure}[!ht]
\centering
\begin{minipage}{\textwidth}
\centering
\begin{tikzpicture}[scale=0.55]
\draw[black,] (0,-1) -- (-1,-2);
\draw[black,] (0,-1) -- (1,-2);
\draw[black,] (-1,-2) -- (-1,-3.5);
\draw[black,] (1,-2) -- (1,-3.5);
\draw[black,] (-1,-5) -- (0,-6);
\draw[black,] (1,-5) -- (0,-6);

\draw (0,-1) node {$\bullet$};
\draw (0,-0.6) node {$x$};
\draw (1,-2) node {$\bullet$};
\draw (1.5,-2) node {$y_{1}$};
\draw (1,-3.5) node {$\bullet$};
\draw (1.5,-3.5) node {$y_{2}$};

\draw (1,-4) node {$.$};
\draw (1,-4.2) node {$.$};
\draw (1,-4.4) node {$.$};

\draw (1,-5) node {$\bullet$};
\draw (2,-5) node {$y_{n-1}$};
\draw (0,-6) node {$\bullet$};
\draw (0,-6.4) node {$z$};
\draw (-1,-5) node {$\bullet$};
\draw (-2,-5) node {$w_{n-1}$};

\draw (-1,-4) node {$.$};
\draw (-1,-4.2) node {$.$};
\draw (-1,-4.4) node {$.$};

\draw (-1,-3.5) node {$\bullet$};
\draw (-1.9,-3.5) node {$w_{2}$};
\draw (-1,-2) node {$\bullet$};
\draw (-1.9,-2) node {$w_{1}$};

\end{tikzpicture}
\end{minipage}
\caption{\em The cycle graph $C_{2n}$.}
\label{Figure 3}
\end{figure}
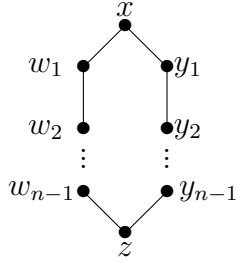

\textsc{Case 1:} Assume $L'(z)\neq \emptyset$. Then assigning any color from $L'(z)$ to $z$ will result in a proper distinguishing coloring since the only vertex colored by $c_x$ is $x$ and $c_{w_{1}}\neq c_{y_{1}}$. 

\textsc{Case 2:} Let $L'(z)= \emptyset$. In order to keep the coloring proper we must assign color $c_x$ to $z$.
Thus, only $x$ and $z$ are colored with $c_{x}$.
Let $f$ be the coloring of $C_{2n}$ obtained thus far.
Without loss of generality, we may assume that there is a nontrivial automorphism $\phi$ preserving $f$.  
Then $\phi$ should either fix both $x$ and $z$, or map $x$ to $z$ and $z$ to $x$.
Similar to \textsc{Case 1}, $\phi$ cannot fix $x$. Thus, $\phi$ maps $x$ to $z$ and $z$ to $x$.

\textsc{Subcase 2.1:} Assume $\phi(y_{1})=y_{n-1}$. Then we must have
$c_{y_{1}} = c_{y_{n-1}}$.
Define $f'$ as follows:
\begin{align}\tag{$\ast$}
    f'(v) = \begin{cases}
        f(v) & \text{ if } v\neq y_{1},\\
        c & \text{ where } c\in L(y_{1})\setminus\{c_{y_{1}}, c_{y_{2}}\}.
    \end{cases}
\end{align}
Since $c_{x}\not\in L(y_{1})$ by $(a)$, we have that $c\neq c_{x}$. Thus, $f'$ is a proper coloring.
The following claim states that $f'$ is a proper distinguishing coloring.

\begin{claim}\label{Claim 2.6}
{\em The only automorphism preserving $f'$ is the identity.}
\end{claim}
\begin{proof}
Let $\psi$ be a nontrivial automorphism preserving $f'$. Similar to \textsc{Case 1}, $\psi$ cannot fix $x$.
Assume that $\psi$ maps $x$ to $z$ and $z$ to $x$. Since $c \neq c_{y_{1}}=c_{y_{n-1}}$, we have $\psi(y_{1})=w_{n-1}$. Consequently, $\psi(w_{1})=y_{n-1}$. Thus,
$c_{y_{1}}= c_{y_{n-1}} = f'(y_{n-1})= f'(w_{1}) = f(w_{1})= c_{w_{1}}$,
which contradicts $(b)$.
\end{proof}

\textsc{Subcase 2.2:} If $\phi(y_{1})=w_{n-1}$, then an argument analogous to \textsc{Subcase 2.1} applies.
\end{proof}

\begin{thm}\label{Theorem 2.7}
    {\em The entries in the following table are correct.}
    
\begin{center}
      \begin{tabular}{|ll|l|l|}
        \hline
             & Graph & $\chi_{D_L}(G)$ \\
             \hline\hline
             & $P_{2t}, t\geq 1$ & $2$\\
             & $P_{2t+1}, t\geq 1$ & $3$ \\
             & $C_4$ & $4$ \\
             & $C_5$ & $3$ \\
             & $C_6$ & $4$\\
             & $C_{2m+1}, m\geq 3$  & $3$ \\
             & $C_{2n}, n\geq 4$ & $3$ \\
               
        \hline
        \end{tabular}
        \\
       Table 1. 
        \label{tab:my_label}
    \end{center}  
\end{thm}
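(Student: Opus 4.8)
The plan is to assemble the table from the results already in hand, splitting every row into a matching lower and upper bound. The three rows for $P_{2t}$, $P_{2t+1}$, and $C_4$ are literally the content of Proposition \ref{Proposition 2.2}, so nothing new is required there. For the four cycle rows $C_5$, $C_6$, $C_{2m+1}$ with $m\geq 3$, and $C_{2n}$ with $n\geq 4$, write $k=4$ when $G=C_6$ and $k=3$ otherwise; the goal is $\chi_{D_L}(G)=k$ in each case.

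The lower bound $\chi_{D_L}(G)\geq k$ is immediate: Fact \ref{Fact:2.1}(3) gives $\chi_{D_L}(G)\geq\chi_D(G)$, and Fact \ref{Fact:2.1}(4) supplies $\chi_D(C_5)=3$, $\chi_D(C_6)=4$, $\chi_D(C_{2m+1})=3$ for $m\geq 3$, and $\chi_D(C_{2n})=3$ for $n\geq 4$, which matches $k$ in every case.

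For the upper bound, fix an arbitrary list assignment $L=\{L(v)\}_{v\in V_G}$ with $|L(v)|=k$ for all $v$, and produce a proper distinguishing coloring from $L$. Since every list has size $k$, the set $\bigcup_{v}L(v)$ has at least $k$ elements, and it has exactly $k$ elements only if all the lists coincide with a single common $k$-set $S$. In that degenerate case a proper distinguishing coloring from $L$ is precisely a proper distinguishing coloring of $G$ that uses the $k$ colors of $S$, and such a coloring exists because $\chi_D(G)=k$ by Fact \ref{Fact:2.1}(4). In the complementary case $|\bigcup_{v}L(v)|\neq k$ (hence $>k$), the desired coloring is exactly what is furnished by Proposition \ref{Proposition 2.3}(1) for $C_5$, by Proposition \ref{Proposition 2.3}(2) for $C_6$, by Proposition \ref{Proposition 2.4} for $C_{2m+1}$ with $m\geq 3$, and by Proposition \ref{Proposition 2.5} for $C_{2n}$ with $n\geq 4$. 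Since these two cases exhaust all admissible $L$, we get $\chi_{D_L}(G)\leq k$, and together with the lower bound this yields $\chi_{D_L}(G)=k$.

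I do not expect any genuine obstacle; the substantive work has been front-loaded into Propositions \ref{Proposition 2.2}--\ref{Proposition 2.5}. The only point that must be stated carefully is the case split: because $|L(v)|=k$ for every vertex, the union of the lists is either the single common $k$-set (reducing to the classical $\chi_D$) or strictly larger (covered by the propositions), so the two cases together certify the bound for \emph{every} list assignment of size $k$, which is what $\chi_{D_L}(G)\leq k$ demands.
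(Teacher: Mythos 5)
Your proof is correct and follows essentially the same route as the paper: lower bounds from $\chi_{D_L}\geq\chi_D$ together with the known values of $\chi_D$, and upper bounds via the case split between identical lists (where Fact \ref{Fact:2.1}(4) applies directly) and non-identical lists (where $|\bigcup_v L(v)|\neq k$, so Propositions \ref{Proposition 2.3}--\ref{Proposition 2.5} apply). Your explicit observation that ``all lists identical'' is equivalent to ``$|\bigcup_v L(v)|=k$'' is the same bridge the paper uses implicitly.
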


\begin{proof}
In each case, we may assume that the lists are not identical; otherwise, we can color the vertices in a manner identical to the proper distinguishing coloring and apply Fact \ref{Fact:2.1}(4). If the lists are non-identical, then by Propositions \ref{Proposition 2.2}, \ref{Proposition 2.3}, \ref{Proposition 2.4}, \ref{Proposition 2.5}, and Facts \ref{Fact:2.1}(3,4), we are done.
\end{proof}

\section{Brooks' type upper bounds}

Imrich et al. \cite[Theorem 3]{IKPS2017} proved that if $H$ is a connected infinite graph with a finite maximum degree $\Delta(H)$, then $\chi_{D}(H)\leq 2\Delta(H)-1$.
We apply Theorem \ref{Theorem 2.7} and modify the algorithm of \cite[Theorem 3]{IKPS2017} suitably to prove the following upper bound for $\chi_{D_{L}}(G)$ if $G$ is a finite graph:

\begin{thm}\label{Theorem 3.1}
{\em Let $G$ be a connected finite graph with maximum degree $\Delta(G)$. Then $\chi_{D_{L}}(G)\leq 2\Delta(G)-1$, unless $G$ is  $K_{\Delta(G),\Delta(G)}$ or $C_6$. In these cases, $\chi_{D_{L}}(G) = 2\Delta(G)$}.    
\end{thm}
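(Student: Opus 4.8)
The plan is to mirror the structure of Imrich--Klav\v{z}ar--Peterin--\v{S}umenjak's proof of the analogous bound $\chi_D(H)\le 2\Delta(H)-1$ for infinite graphs, but adapted to the finite, list-setting. The key idea there is a greedy/inductive coloring along a well-chosen ordering of the vertices: one colors vertices one at a time so that (i) the coloring stays proper, and (ii) enough ``freedom'' is retained so that an automorphism-breaking move can be performed near the end. Since we now have lists of size $2\Delta(G)-1$ rather than a palette of $2\Delta(G)-1$ colors, and since the graph is finite (so there is no ``last'' vertex to defer to infinity), the adaptation is where the real work lies.

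First I would dispose of the small and degenerate cases. For $\Delta(G)\le 2$, $G$ is a path or a cycle, and the result follows directly from Theorem~\ref{Theorem 2.7} (note $2\Delta(G)-1 = 3$ when $\Delta(G)=2$, matching the table except for $C_4$ and $C_6$; but $C_4 = K_{2,2} = K_{\Delta,\Delta}$ and $C_6$ are exactly the stated exceptions). So assume $\Delta(G)\ge 3$. Next I would handle the two exceptional graphs: for $K_{\Delta,\Delta}$ and $C_6$ one shows $\chi_{D_L} = 2\Delta$ by exhibiting a bad list assignment of size $2\Delta-1$ (for which no proper list-coloring is distinguishing) together with the easy upper bound $\chi_{D_L}\le 2\Delta$ — the latter because with $2\Delta$ identical colors the ordinary distinguishing chromatic number already equals $2\Delta$ by Theorem~\ref{Theorem 1.1}, and more generally one can first properly list-color (possible since $\chi_L(G)\le\Delta+1\le 2\Delta$ when... actually $\chi_L(K_{\Delta,\Delta})=2$) and then fix distinguishing. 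The bad-assignment part for $K_{\Delta,\Delta}$ is essentially the Collins--Trenk obstruction lifted to lists; I expect this to be short.

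For the main case ($\Delta(G)\ge 3$, $G$ not an exception), the plan is: pick a spanning tree $T$ of $G$ and root it; process vertices in a BFS order from the root, so that each vertex (other than the root) has exactly one already-colored neighbor in $T$ but possibly up to $\Delta-1$ already-colored neighbors in $G$. When coloring vertex $v$, the forbidden set (colors of already-colored neighbors) has size at most $\Delta$, so from a list of size $2\Delta-1$ we retain at least $\Delta-1\ge 2$ legal choices — this keeps the coloring proper throughout and, crucially, leaves at least one ``spare'' color at each step, which is the budget we spend to break automorphisms. To make the coloring distinguishing I would single out one or two vertices where the retained freedom is used to create a color pattern (e.g., a uniquely colored vertex, or a uniquely colored edge/path) that no nontrivial automorphism can preserve; this is the step that required the case analysis in Proposition~\ref{Proposition 2.4} and Proposition~\ref{Proposition 2.5} for cycles, and in the general graph it is where the argument of \cite{IKPS2017} is modified — one typically fixes the root's color to be ``rare'' and then argues that any color-preserving automorphism must fix the root, hence acts on its subtrees, and then an inductive imbalance forces it to be the identity.

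**The hard part will be** the distinguishing step in the general finite case: in the infinite setting of \cite{IKPS2017} one has the luxury of an infinite ``tail'' to absorb the symmetry-breaking, whereas here one must exhibit, within a finite greedy coloring with only a one-color-per-vertex surplus, a configuration rigid enough to kill all of $\mathrm{Aut}(G)$ — and one must verify that the only graphs where this surplus is genuinely insufficient are precisely $K_{\Delta,\Delta}$ and $C_6$. I would expect the write-up to isolate a ``bad vertex'' or ``bad pair'' where the one spare color cannot be deployed, show that forces $G$ to be highly symmetric and regular, and then pin it down to the two exceptions via a structural argument (counting, or analyzing the stabilizer action on the neighborhood), exactly in the spirit of how Brooks-type theorems handle their own exceptional cliques.
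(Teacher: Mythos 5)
Your high-level plan --- reduce $\Delta(G)\le 2$ to Theorem \ref{Theorem 2.7}, root a BFS spanning tree at a maximum-degree vertex, color greedily from the lists in BFS order, make the root identifiable, and locate $K_{\Delta(G),\Delta(G)}$ as the obstruction in a final case analysis --- is exactly the paper's strategy. But there is a genuine gap in the greedy step as you describe it. You forbid only the colors of already-colored neighbors, which keeps the coloring proper but gives no handle on automorphisms that fix the root: two BFS-siblings could receive the same color and be transposed. The paper's greedy step additionally forbids the colors of the earlier BFS-siblings and the root's color $c_v$, so that (in the spirit of the Collins--Trenk tree lemma, cf.\ Lemma \ref{lemma 3.3}) any automorphism fixing the root is forced to be the identity. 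With these extra forbidden colors the budget is not ``$\Delta-1\ge 2$ spare colors at each step'' as you claim: a vertex can have $\Delta(G)$ earlier neighbors and $\Delta(G)-2$ earlier siblings, and together with $c_v$ that is $2\Delta(G)-1$ forbidden colors --- the entire list. So the greedy step can genuinely fail, and the real content of the proof is the fallback: when it fails, the vertex is colored $c_v$ anyway, and the structural facts forced by the failure ($c_v\in L(x)$, $|N(x)|=\Delta(G)$, $|S(x)|=\Delta(G)-2$, all neighbors and siblings already colored with colors from $L(x)$) are exactly what drive the subsequent repair.

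The second gap is that ``single out one or two vertices whose pattern no automorphism can preserve'' is not enough as stated: the root's color $c_v$ need not be rare (other lists may not contain any globally rare color), so the paper instead makes the root the unique vertex with a local configuration $(\ast)$ --- colored $c_v$ with its neighborhood colored by the specific palette used on $N(v)$ --- and then runs a case analysis recoloring every other vertex that accidentally acquires $(\ast)$, by swapping its color with a suitably chosen sibling or neighbor whose list is known to contain the needed colors. It is precisely in the last subcase of that analysis (every neighbor of the offending vertex is a child of the root) that the hypothesis $G\neq K_{\Delta(G),\Delta(G)}$ is invoked; $C_6$ never reappears after the $\Delta(G)=2$ case, contrary to your expectation that both exceptions must be re-derived there. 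A minor point: for the exceptional values you do not need to construct a bad list assignment --- $\chi_{D_L}(G)\ge\chi_D(G)=2\Delta(G)$ already follows from Theorem \ref{Theorem 1.1}, and a rainbow coloring from lists of size $2\Delta(G)$ gives the matching upper bound.
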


\begin{proof}
   If $\Delta(G)=2$, then by Theorem \ref{Theorem 2.7}, we have $\chi_{D_L}(G) = 4$ if and only if $G$ is $C_4$ or $C_6$. It is easy to see that $\chi_{D_L}(K_{\Delta(G),\Delta(G)}) = 2\Delta(G)$, and $K_{2,2}$ is $C_4$. Hence, we may assume that $\Delta(G) \geq 3$ and $G$ is not $K_{\Delta(G),\Delta(G)}$. We show that in this case $\chi_{D_L}(G) \leq 2\Delta(G)-1$. Let $L=\{L(v)\}_{v\in V_G}$ be an assignment of lists with $|L(z)| = 2\Delta(G)-1$, for all $z\in V_G$. 
   Let $v\in V_G$ be a vertex with degree $\Delta(G)$ and $T$ be a breadth-first search ($BFS$) spanning tree of $G$ rooted at $v$. We use the notation $<$ to denote the $BFS$ order. Also, for $z\in V_{G}$, by $N(z)$ we denote the set of its neighbors in $G$ and $S(z)$ denotes the set of its siblings in $T$. We will define partial functions $f_x: V_{G} \to \bigcup_{z\in V_{G}}L(z)$ for $x\in V_{G}$ such that the following holds: 
\begin{enumerate}
    \item[(i)] if $x < y$, then $f_x\subseteq f_y$,
    \item[(ii)] if $f=\bigcup_{x\in V_{G}} f_x$, then $f(v)\in L(v)$ for all $v\in V_{G}$,
    
    \item[(iii)] the domain of $f_{x}$ includes all vertices till $x$ in the $BFS$ order.
\end{enumerate}
For the base case, we assign pairwise different colors to $v$ and to members of $N(v) = \{v_1,\dots, v_{\Delta(G)}\}$ from the respective lists, say $c_v\in L(v)$ and $c_{v_i}\in L(v_i)$, where $1\leq i\leq \Delta(G)$. 
For the rest, we try to keep $v$ to be the only vertex of $G$ with the property of being colored with $c_v$, while all its neighbors are colored by $c_{v_1},\dots, c_{v_{\Delta(G)}}$. We will refer to this property as ($\ast$). 

We proceed by coloring the vertices of $G$ in their $BFS$ order inductively, as follows: Let $x\in V_{G}$ be the $<$-minimal element for which no color has been assigned and $z<x$ be the immediate $BFS$ predecessor of $x$. We define $f_x$. Let us write,
$$
    S_<(x) = \{y\in S(x): y<x\} \text{ and } N_<(x) = \{y\in N(x): y<x\}.
$$
Then $f_z$ has already assigned colors to each member of $S_<(x) \cup N_<(x)$. Define,
$$
A_x= L(x)\setminus \Big(\{c_v\} \cup f_z[S_<(x)] \cup f_z[N_<(x)]\Big).
$$
If $A_{x} \neq \emptyset$, then let $f_x = f_z \cup \{\langle x, c \rangle\} $,
where $c\in A_x$. If $A_x=\emptyset$, then since $|N(x)| \leq \Delta(G)$ and $|S(x)|\leq \Delta(G)-2$, we must have used $|S_<(x)| + |N_<(x)| = 2\Delta(G) -2$ many colors from $L(x)$ to color the neighbors and siblings that come before $x$. Thus, the following holds: 

\begin{enumerate}
    \item[$(a)$] $c_v\in L(x)$,
    \item[$(b)$] $S_<(x)=S(x)$ and $N_<(x)=N(x)$,
    \item[$(c)$] $|S(x)| = \Delta(G) -2$ and $|N(x)| = \Delta(G)$,
    \item[$(d)$] $N(x) \cap S(x) = \emptyset$,
    \item[$(e)$] for all $y\in S(x) \cup N(x)$, we have $f_z(y)\in L(x)$.
\end{enumerate}
In this case, let $f_x = f_z\cup \{\langle x, c_v \rangle\}$.
In either case, $f_x$ has (i) and (iii). 
Finally, set $f=\bigcup_{x\in V_{G}}f_x$. It is clear that $f$ satisfies (ii), and whenever $f(x)=c_v$ where $x\neq v$, then $(a)-(e)$ must hold. Moreover, if no vertex other than $v$ has property $(\ast)$, then it is easy to see that $f$ is a proper distinguishing coloring. 
However, suppose there is a vertex other than $v$, say $x$, with property $(\ast)$ and is the $<$-minimal such vertex. Then $(a)-(e)$ are satisfied for $x$. We modify $f$ in such a way that $x$ has no longer $(\ast)$, while it remains a proper list coloring.
By a careful introspection we will analyze the following cases: \footnote{We note that \textsc{Case 1} and {\em Case A} of \textsc{Subcase 3.1} arise when the lists are nonidentical.}

\noindent \textsc{Case 1:} If there is $w\in S(x)$ such that $|N(w)| \neq |N(x)|$, then color $x$ with $f(w)$,
which can be done by $(e)$. Then there is no color-preserving automorphism of $G$, which fixes $v$ and maps $x$ to $w$. Meanwhile, by $(d)$, this keeps the coloring proper, and $x$ has no longer the property $(\ast)$.

\noindent \textsc{Case 2:} For every $z\in S(x)$, we have $|N(z)| = |N(x)|$, and there is a $w\in S(x)$ such that $N(w)\neq N(x)$. By $(c)$, fix $y\in N(x)$ such that $y\not\in N(w)$. Since the neighbors of $x$ must have occurred before $x$ in the $BFS$ ordering, we have $f(w), f(y)\in L(x)$ by $(e)$.
We color $x$ with $f(w)$. The rest follows the arguments of \textsc{Case 1}.

\noindent  \textsc{Case 3: } For every $w\in S(x)$, we have $N(w) = N(x)$.
\begin{enumerate}
    \item[]\textsc{Subcase 3.1:} There is a vertex $z\in N(x)$ with no sibling or parent colored with $c_v$. We need to consider two different cases.
    \begin{enumerate}
    \item[]\textit{Case A:}
    If $c_v\not\in L(z)$, then by $(c)$ at most $(\Delta(G)-1) + (\Delta(G)-2) = 2\Delta(G)-3$ number of colors can appear in the list $L(z)$ that are used for coloring $N(z)\cup S(z)$. Hence, there must be $c\in L(z)$ such that $c\neq f(z)$, which also differs from all the colors assigned to $N(z)\cup S(z)$. 
    We color $z$ with $c$ and $x$ with $f(z)$.
    Then $x$ no longer has $(\ast)$ (as no member of $N(v)$ has color $c$), and this modification keeps the coloring proper.
    Moreover, $z$ does not have the property 
    $(\ast)$ as $z$ is colored by $c$, but $c\neq c_{v}$, since $c\in L(z)$ and $c_{v}\not\in L(z)$.
   \item[]  \textit{Case B:} If $c_v\in L(z)$, then color $x$ with $f(z)$ and $z$ with $c_{v}$, which can be done by $(e)$. Clearly, $x$ does not have $(\ast)$, and $z$ does not have $(\ast)$ either by $(b)-(d)$. 
    \end{enumerate}
   
    \item[] \textsc{Subcase 3.2:} Each $y \in N(x)$ has a sibling or parent colored $c_v$. By $(b)$, each $y\in N(x)$ and its siblings must have come before $x$ in the $BFS$ order. Hence, their parent cannot have ($\ast$), unless it is the root $v$. There must exist a $z\in N(x)$ such that $z$ is not a child of $v$. Otherwise, $G$ is $K_{\Delta(G),\Delta(G)}$, which contradicts the assumption that $G$ is not $K_{\Delta(G),\Delta(G)}$. 
    Fix such $z$.
    Let $B(z)$ denotes the set of colors assigned to the vertices of $N(z)\cup S(z)$. We claim that the set $L'(z)=L(z)\backslash \{f(z)\cup B(z)\}$ is non-empty. Let $p$ be the parent of $z$ in $T$. Then $N(z) = S(x) \cup \{p\}\cup \{x\}$, $\vert S(z)\vert \leq \Delta(G)-2$, and $\vert N(z)\vert = \vert S(x)\cup \{p\}\cup \{x\}\vert=\Delta(G)$. The color $c_v$ was used once for $x$ and once among the vertices of $S(z) \cup \{p\}$. Thus, $\vert B(z)\vert\leq 2\Delta(G)-3$. Since $\vert L(z)\vert= 2\Delta(G)-1$, the set $L'(z)$ is non-empty.
    Pick $c\in L'(z)$.
    Using $(e)$, we can color $x$ with $f(z)$ and $z$ with $c$.  We note that $z$ does not get property $(\ast)$. This is because $c\neq c_{v}$ since $c_{v} \in B(z)$.
    Similarly, as in the previous cases, the new coloring remains a proper list coloring, and $v$ is the only $<$-predecessor of $x$ having $(\ast)$, and all of them are fixed as long as $v$ is fixed.
\end{enumerate}
By iterating this procedure, we can recolor every node with property $(\ast)$, and hence we obtain a list-distinguishing proper coloring of $G$.
\end{proof}

\begin{remark}\label{Remark 3.2}
Alikhani--Soltani \cite[Theorem 3.2]{AS2016} proved that $\chi_{D}(G)\leq \Delta(G)+1$ if $\Delta(G)\geq 3$, where $G$ is a connected unicyclic graph. Inspired by that result, we prove that if $G$ is a connected unicyclic graph of girth at least 7, then $\chi_{D_{L}}(G)\leq \Delta(G)$ if $\Delta(G)\geq 3$. First, we observe that, similar to \cite[Lemma 3.2]{CT2006} due to Collins and Trenk, one can prove the following lemma:
\begin{lem}\label{lemma 3.3}
{\em Let $(T,z)$ be a rooted tree, and let $L=\{L(v)\}_{v\in V_{T}}$ be an assignment of lists. A coloring of $(T,z)$ in which each vertex, colored from its lists, that is colored differently from its siblings and from its parent, is a properly $L$-distinguishing coloring.}
\end{lem}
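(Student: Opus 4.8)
The plan is to follow the pattern of Collins and Trenk's \cite[Lemma 3.2]{CT2006}, adapting it to list colorings; both halves of the statement reduce to short structural observations about rooted trees.

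I would first fix a coloring $c$ of $(T,z)$ satisfying the hypothesis: $c(v)\in L(v)$ for every $v$, the color of each non-root vertex differs from that of its parent, and distinct children of a common vertex receive distinct colors. That $c$ is \emph{proper} is then immediate, since every edge of a rooted tree joins a vertex to its parent and the endpoints of such an edge are colored differently. For the distinguishing part I would take an automorphism $\phi$ of the rooted tree $(T,z)$ that preserves $c$ and show $\phi=\mathrm{id}$ by induction on the distance of a vertex from the root. Since $\phi$ fixes $z$ and preserves graph distances, it maps each level onto itself; the base case (level $0$) is exactly $\phi(z)=z$. In the inductive step, assuming $\phi$ fixes every vertex at level less than $k$, for a vertex $v$ at level $k$ with parent $p$ (at level $k-1$, hence fixed by $\phi$), the image $\phi(v)$ is a neighbor of $p$ lying at level $k$, i.e.\ again a child of $p$; since $c(\phi(v))=c(v)$ and the children of $p$ carry pairwise distinct colors, $\phi(v)=v$. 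Thus $\phi$ fixes every vertex, so $c$ is distinguishing.

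The only point I would be careful to pin down --- and the one place the statement could be misread --- is what ``$L$-distinguishing coloring of $(T,z)$'' means: here it must refer to automorphisms of the \emph{rooted} tree, i.e.\ those fixing the root $z$, exactly as in \cite{CT2006}. It cannot be strengthened to all of $\mathrm{Aut}(T)$: for the path $P_3$ with center $z$ and leaves $a,b$, rooted at $a$, the hypothesis allows $c(a)=c(b)$ (the leaf $b$ has no siblings and need only differ from $z$), yet the transposition swapping $a$ and $b$ is a nontrivial color-preserving automorphism of $P_3$. Apart from making this convention explicit, there is no genuine obstacle: the proof uses only that a root-fixing automorphism preserves levels and that, within a level, a vertex is pinned down among its siblings by its color.
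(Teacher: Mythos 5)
Your proof is correct and is exactly the level-by-level induction the paper has in mind (the paper omits the argument, noting only that it is analogous to Collins--Trenk's \cite[Lemma 3.2]{CT2006}). Your caveat that ``distinguishing'' must be read with respect to automorphisms of the \emph{rooted} tree, i.e.\ those fixing $z$, is the right reading and is the only form needed where the lemma is applied in Remark \ref{Remark 3.2}, since there the roots $x_i$ lie on the cycle and are already fixed.
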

Let $C$ be the unique cycle in $G$ with a set $\{x_{i}:1\leq i\leq t\}$ of vertices. Then, $G \backslash E_{C}$ is the union of trees $T_{x_{i}}$, $1 \leq i \leq t$, where $T_{x_{i}}$ has only one common vertex $x_{i}$ with the cycle $C$. 
Assign a list assignment $L=\{L(v)\}_{v\in V_{G}}$ such that $\vert L(v)\vert= \Delta(G)$ for all $v\in V_{G}$.  
\begin{itemize}
    \item By Theorem \ref{Theorem 2.7}, we have $\chi_{D_{L}}(C)= 3$ since $t\geq 7$.
    \item Define $L^{1}(v):=\mathrm{init}_{3}(L(v))$ for each vertex $v\in V_{G}$, where init$_{3}(L(v))$ denotes the first $3$ elements of $L(v)$, and let $L^{1}=\{L^{1}(v)\}_{v\in V_{C}}$. Since $C$ is properly $L^{1}$-distinguishable (say by coloring $f^1$), we define $f\restriction V_{C}:=f^{1}$.
\end{itemize} 

Fix $1 \leq i \leq t$. 
Working much in the same way as in Alikhani–Soltani \cite[Theorem 3.2]{AS2016} and by applying Lemma \ref{lemma 3.3}, we have $\chi_{D_{L}}(T_{x_{i}}\backslash \{x_{i}\})\leq \Delta(G)$. 
Since $\vert L(v)\vert= \Delta(G)$ for all $v\in V_{G}$, color the vertices of $T_{x_{i}}\backslash\{x_{i}\}$ uniquely from its list (say by $f^i$). 
Define $f\restriction V_{T_{x_{i}}\backslash\{x_{i}\}}:=f^{i}$.

If $\phi$ is a nontrivial automorphism preserving $f$, then all vertices of $C$ are fixed by $\phi$, and consequently all vertices in the $T_{x_{i}}$'s are fixed. Thus, $f$ is a proper distinguishing coloring, and we conclude that $\chi_{D_{L}}(G) \leq \Delta(G)$.
\end{remark}

\section{An application of Lov\'{a}sz's lemma}
For a subset $X\subseteq V_{G}$ of vertices of a graph $G$, let $G[X]$ denote the subgraph of $G$ induced by $X$. 

\begin{lem}\label{Lemma 4.1}{\em (Lov\'{a}sz; \cite{Lov1966})}
{\em If $\sum_{i=1}^{t} x_{i}\geq \Delta(G)+1-t$, then there is a partition of $V_{G}$ into $t$-sets $V_{1},...,V_{t}$ such that $\Delta(G[V_{i}])\leq x_{i}$ for all $1\leq i\leq t$.}     
\end{lem}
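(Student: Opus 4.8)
The plan is to prove this by induction on $t$. The base case $t=1$ is trivial: the hypothesis reads $x_1 \geq \Delta(G)$, and taking $V_1 = V_G$ works since $\Delta(G[V_1]) = \Delta(G) \leq x_1$. For the inductive step, I would fix $t \geq 2$ and a graph $G$ with $\sum_{i=1}^t x_i \geq \Delta(G) + 1 - t$. The idea is to peel off one part $V_t$ and apply the induction hypothesis to $G[V_G \setminus V_t]$ with the remaining budget $x_1, \dots, x_{t-1}$; for this to go through, I need $V_t$ chosen so that $\Delta(G[V_t]) \leq x_t$ and, simultaneously, $\Delta(G[V_G \setminus V_t]) \leq \sum_{i=1}^{t-1} x_i - (t-1) + \dots$ — more precisely so that the induction hypothesis applies to $G[V_G\setminus V_t]$, which requires $\sum_{i=1}^{t-1} x_i \geq \Delta(G[V_G\setminus V_t]) + 1 - (t-1)$, i.e. $\Delta(G[V_G\setminus V_t]) \leq \Delta(G) - x_t - 1$.

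The heart of the argument is therefore an extremal choice of the partition $(A,B) = (V_t, V_G\setminus V_t)$. I would consider all ordered partitions of $V_G$ into two sets and pick one minimizing the quantity $\Phi(A,B) = x_t \cdot |E(G[B])| + \bigl(\Delta(G) - x_t - 1\bigr)\cdot\text{(something)}$ — the classical trick is to weight edges so that moving a vertex across the cut to fix a local violation strictly decreases the potential. Concretely, assign weight $(\Delta(G) - x_t)$ to each edge inside $A$ and weight $(x_t+1)$ to each edge inside $B$ (adjusting constants as needed), and take a partition minimizing the total weight. If some vertex $v \in A$ has $\deg_{G[A]}(v) \geq x_t + 1$, moving $v$ to $B$ removes at least $x_t+1$ edges of weight $(\Delta(G)-x_t)$ from $A$ and adds at most $\deg_{G[B]}(v)$ edges of weight $(x_t+1)$ to $B$; using $\deg_{G[A]}(v) + \deg_{G[B]}(v) \leq \Delta(G)$ one checks the potential strictly drops, a contradiction. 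Symmetrically, no $v \in B$ has $\deg_{G[B]}(v) \geq \Delta(G) - x_t$. Hence the minimizing partition satisfies $\Delta(G[A]) \leq x_t$ and $\Delta(G[B]) \leq \Delta(G) - x_t - 1$.

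With such $(A,B)$ in hand, the induction hypothesis applies to $G[B]$ with parameters $x_1, \dots, x_{t-1}$, since $\sum_{i=1}^{t-1} x_i = \bigl(\sum_{i=1}^t x_i\bigr) - x_t \geq \Delta(G) + 1 - t - x_t = \Delta(G[B]) + 1 - (t-1) + \bigl(\Delta(G) - x_t - 1 - \Delta(G[B])\bigr) \geq \Delta(G[B]) + 1 - (t-1)$, using $\Delta(G[B]) \leq \Delta(G) - x_t - 1$. This yields a partition of $B$ into $V_1, \dots, V_{t-1}$ with $\Delta(G[V_i]) \leq x_i$, and setting $V_t := A$ completes the partition of $V_G$.

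The main obstacle is getting the edge-weights in the potential function exactly right so that both local-improvement arguments (for a vertex overloaded on the $A$ side and for one overloaded on the $B$ side) go through simultaneously from a single minimizing partition; the constants must be chosen so that in each case the strict decrease follows purely from $\deg_{G[A]}(v) + \deg_{G[B]}(v) \leq \Delta(G)$. I also need to handle the degenerate possibilities (e.g. $x_t \leq -1$, or $x_t \geq \Delta(G)$, in which case one part can be empty or absorb everything) separately, but these are routine. Since the lemma is quoted verbatim from Lovász \cite{Lov1966}, I would in fact simply cite it; the above is the standard proof one would reconstruct.
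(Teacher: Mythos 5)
The paper does not prove this lemma at all: it is quoted verbatim from Lov\'asz's 1966 note and used as a black box, so citing it (as you say you would in practice) is exactly what the authors do. Your reconstruction is nonetheless correct. The potential-function computation checks out: with edge weights $\Delta(G)-x_t$ inside $A$ and $x_t+1$ inside $B$, moving a vertex $v\in A$ with $\deg_{G[A]}(v)\geq x_t+1$ across changes $\Phi$ by at most $(x_t+1)\bigl(\deg_{G[B]}(v)-(\Delta(G)-x_t)\bigr)<0$ since $\deg_{G[B]}(v)\leq\Delta(G)-\deg_{G[A]}(v)\leq\Delta(G)-x_t-1$, and the symmetric case is analogous; the arithmetic verifying that the induction hypothesis applies to $G[B]$ is also right. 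The one respect in which your route differs from the classical argument is the induction on $t$ with repeated two-part splits: Lov\'asz's original proof is a single extremal step over all $t$-part partitions, minimizing $\sum_{i=1}^{t}e(G[V_i])/(x_i+1)$ and observing that for any vertex $v$ violating the degree bound in its own part there must exist a part $V_j$ with $\deg_{V_j}(v)\leq x_j$ (else $\deg(v)\geq\sum_j(x_j+1)\geq\Delta(G)+1$), into which $v$ can be moved with a strict decrease of the potential. The direct version is slightly cleaner because it needs only one set of weights and no bookkeeping of degenerate splits ($x_t\geq\Delta(G)$ or $x_t\leq -1$), which your inductive version must handle separately as you note; but both are valid, and for the purposes of this paper the citation suffices.
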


\begin{defn}\label{Definition 4.2}
A graph is {\em $k$-connected} if it has at least $k+1$ vertices and the removal of $k-1$ or fewer vertices leaves a connected graph. We denote init$_{k}(L)$ by the first $k$ elements of a list $L$.
\end{defn}

We apply Lemma \ref{Lemma 4.1} and Theorem \ref{Theorem 3.1} to prove the following:

\begin{thm}\label{Theorem 4.3}{\em Fix $7\leq r\leq \Delta(G)+1$. Let $G$ be a $(n-r+1)$-connected graph of order $n$ such that $G$ does not contain a complete bipartite graph $K_{r-1,r-1}$.
Then, $\chi_{D_{L}}(G)\leq 2\Delta(G)-  (3\lfloor\frac{(\Delta(G)+1)}{r}\rfloor-2)$.
}
\end{thm}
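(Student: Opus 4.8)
The plan is to imitate, for $\chi_{D_{L}}$, the Borodin--Kostochka--Catlin--Lawrence strategy: partition $V_{G}$ by Lemma~\ref{Lemma 4.1} into pieces of small maximum degree, colour each piece via Theorem~\ref{Theorem 3.1}, and weld the pieces together, with $(n-r+1)$-connectivity playing the role that clique-freeness plays classically. Put $q=\lfloor\frac{\Delta(G)+1}{r}\rfloor$ and $M=2\Delta(G)-(3q-2)$. Two arithmetic facts drive everything: since $qr\le\Delta(G)+1$ we have $q(2r-3)\le M$; and whenever $x_{1},\dots,x_{q}$ are integers with $\sum_{i}x_{i}=\Delta(G)+1-q$, we have $\sum_{i}(2x_{i}-1)=2(\Delta(G)+1-q)-q=M$. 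When $q=1$ the statement is Theorem~\ref{Theorem 3.1} with both exceptional graphs excluded by the hypotheses ($K_{\Delta(G),\Delta(G)}$ contains $K_{r-1,r-1}$, and $C_{6}$ has $\Delta=2<6\le r-1$), so one assumes $q\ge 2$.

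First I would unpack the connectivity. Since connectivity is at most minimum degree, $\delta(G)\ge n-r+1$, so every vertex is non-adjacent to at most $r-2$ others; hence for any $U\subseteq V_{G}$ and any $v\in U$ one gets $\deg_{G[U]}(v)\ge |U|-(r-1)$. Consequently, $\Delta(G[U])\le x$ forces $|U|\le x+r-1$, and if a component $H$ of $G[U]$ is a balanced complete bipartite graph $K_{s,s}$ then $s\le r-2$ (otherwise $K_{r-1,r-1}\subseteq K_{s,s}\subseteq G$), so $\chi_{D_{L}}(H)=2s\le 2r-4$ there; one also records $n\le\Delta(G)+r-1$. Next I would apply Lemma~\ref{Lemma 4.1} with $t=q$ and integers $x_{1},\dots,x_{q}\ge r-1$ summing to $\Delta(G)+1-q$, spread as evenly as possible (this is permitted because $\Delta(G)+1-q\ge q(r-1)$, and the hypothesis of the lemma holds with equality), obtaining $V_{G}=V_{1}\sqcup\dots\sqcup V_{q}$ with $\Delta(G[V_{i}])\le x_{i}$. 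Since $G$ contains no $K_{r-1,r-1}$, no component of $G[V_{i}]$ is a balanced complete bipartite $K_{s,s}$ with $s\ge r-1$; such a component with $s\le r-2$ uses at most $2r-4<2x_{i}-1$ colours, a $C_{6}$ component uses $4<2x_{i}-1$ (as $r\ge 7$), and every other component $H$ uses at most $2\Delta(H)-1\le 2x_{i}-1$ colours by Theorem~\ref{Theorem 3.1}. Hence each component of each $G[V_{i}]$ is properly list-distinguishable from lists of size $2x_{i}-1$, and the total budget across the pieces is $\sum_{i}(2x_{i}-1)=M$.

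The remaining, and genuinely hard, step is to combine these colourings into a single proper $L$-distinguishing colouring of $G$ when $|L(v)|=M$ throughout. I would process the pieces in an order $V_{1},\dots,V_{q}$ that places a vertex of degree $\Delta(G)$ together with its neighbourhood first and, on reaching $V_{i}$, delete from each $L(v)$ ($v\in V_{i}$) the colours already assigned to neighbours of $v$ in $V_{1}\cup\dots\cup V_{i-1}$ before applying Theorem~\ref{Theorem 3.1} to $G[V_{i}]$; the number of deleted colours is at most $\min\{\deg(v),\sum_{j<i}|V_{j}|\}\le\min\{\Delta(G),\sum_{j<i}(x_{j}+r-1)\}$, and it is precisely the connectivity-derived bounds on $|V_{j}|$, together with $q(2r-3)\le M$, that should keep every surviving list of size at least $2x_{i}-1$, so that no inter-piece edge is monochromatic and each $G[V_{i}]$ is list-distinguished.

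The delicate part is global rigidity: a colour-preserving automorphism of $G$ need not respect the partition (and could even permute isomorphic components inside a single $G[V_{i}]$), so one must still exclude it. I expect this to be forced as in the proof of Theorem~\ref{Theorem 3.1} --- arrange that the root keeps a unique ``property $(\ast)$'' signature, so that every colour-preserving automorphism fixes it, then recolour the finitely many remaining ambiguous vertices using the residual slack ($q(2r-3)\le M$ is strict unless $r\mid\Delta(G)+1$, and every piece is small), with case analyses parallel to Cases~1--3 there. Verifying that the list-size bookkeeping survives these local recolourings uniformly over all $r\le\Delta(G)+1$, and in particular handling the tight extremal configurations --- near-regular pieces, and values of $\Delta(G)$ just above $(q-1)r$, e.g.\ $\Delta(G)=2r-1$ --- is where the main effort goes, and it may require treating a few extremal cases separately.
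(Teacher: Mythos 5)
Your decomposition is the paper's: take $t=\lfloor\frac{\Delta(G)+1}{r}\rfloor$, apply Lemma \ref{Lemma 4.1} to get parts with $\Delta(G[V_i])\leq r-1$ (last part $\leq \Delta(G)-r(t-1)$), bound each part by Theorem \ref{Theorem 3.1} using the $K_{r-1,r-1}$-freeness and $r\geq 7$ to rule out the exceptional graphs, and note that the per-part bounds sum to $2\Delta(G)-(3t-2)$. Your arithmetic and your per-piece analysis are fine (the paper uses the $(n-r+1)$-connectivity only to conclude that each part of size $\geq r$ induces a \emph{connected} subgraph, which is tidier than your component-by-component treatment, but both work).

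The genuine gap is the welding step, which you correctly identify as the hard part but do not resolve, and the specific scheme you sketch fails. First, the counting: for $v$ in the last part with a balanced partition $x_j=r-1$, the colours you must delete can number up to $\sum_{j<q}|V_j|\leq\sum_{j<q}(x_j+r-1)=2(q-1)(r-1)$, while the available slack is $M-(2x_q-1)=\sum_{j<q}(2x_j-1)=(q-1)(2r-3)$; the deficit is $q-1$, so the surviving lists need not have size $2x_i-1$. Second, and more fundamentally, sequential deletion only enforces properness across parts; it gives no control over colour-preserving automorphisms of $G$ that fail to respect the partition, and your plan to ``force rigidity as in Theorem \ref{Theorem 3.1}'' is not an argument. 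The paper's resolution (Claim \ref{claim 4.4}) is a single clean idea you are missing: with $k_i=\chi_{D_L}(G[V_i])$ and $|L(v)|=\sum_i k_i$, split each list into $t$ pairwise disjoint consecutive blocks $L^1(v),\dots,L^t(v)$ with $|L^i(v)|=k_i$, and colour all of $V_i$ from the $i$-th blocks. This yields $\chi_{D_L}(G)\leq\sum_i\chi_{D_L}(G[V_i])$ with no bookkeeping about cross-part neighbours at all, and is the intended mechanism for making the parts invariant under any colour-preserving automorphism, so that distinguishability reduces to the parts. Without that (or an equivalent device), your proof does not close.
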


\begin{proof}
Let $t=\lfloor\frac{(\Delta(G)+1)}{r}\rfloor$, $h_{i}=r-1$ for all $1\leq i\leq t-1$, and $h_{t}=\Delta(G)-r(t-1)$. Clearly,
\[t\geq 1 \text{ and } \sum_{i=1}^{t} h_{i}=\Delta(G)+1-t.\] 
By Lemma \ref{Lemma 4.1}, there is a partition of $V_{G}$ into $V_{1},...,V_{t}$ such that $\Delta(G[V_{i}])\leq h_{i}=r-1$, for all $1\leq i\leq t-1$ and $\Delta(G[V_{t}])\leq h_{t}=\Delta(G)-r(t-1)$.

\begin{claim}\label{claim 4.4}
   $\chi_{D_{L}}(G)\leq \sum_{i=1}^{t} \chi_{D_{L}}(G[V_{i}])$. 
\end{claim}

\begin{proof}
Let $k_{i}=\chi_{D_{L}}(G[V_{i}])$ for each $1\leq i\leq t$. Assign any list $L(v)$ to each vertex $v \in V_{G}$ such that $\vert L(v)\vert= \sum_{i=1}^{t}k_{i}$. We define a coloring $f$ for the vertices of $G$ as follows:

\begin{enumerate}
    \item Let $L^{1}(v):=$ init$_{k_{1}}(L(v))$ for each vertex $v\in V_{G}$. 
    \vspace{2mm}
    
    Let $L^{1}=\{L^{1}(v)\}_{v\in V_{G[V_{1}]}}$. Since $G[V_{1}]$ is properly $L^{1}$-distinguishable (say by coloring $f^1$), we define $f\restriction V_{G[V_{1}]}:=f^{1}$.
    \vspace{2mm}
    
    \item For any $1<i\leq t$, let $L^{i}(v):=$ init$_{k_{i}}(L(v)\backslash \sum_{k= 1}^{i-1} L^{k}(v)) $ for each vertex $v\in V_{G}$. 
    \vspace{2mm}
    
    Let $L^{i}=\{L^{i}(v)\}_{v\in V_{G[V_{i}]}}$. Since $G[V_{i}]$ is properly $L^{i}$-distinguishable (say by coloring $f^i$), we define $f\restriction V_{G[V_{i}]}:=f^{i}$.
\end{enumerate}  

If $\phi$ is an automorphism of $G$ preserving the coloring $f$, then the range of $G[V_{i}]$ with respect to $\phi$ is $G[V_{i}]$, for all $1\leq i\leq t$. Thus, $\phi\restriction G[V_{i}]$ is an automorphism of $G[V_{i}]$ preserving $f$. Since the above defined colorings of $G[V_{i}]$'s are distinguishing, we have that $\phi\restriction G[V_{i}]$ is a trivial automorphism for all $1\leq i\leq t$. Thus, $f$ is a distinguishing coloring of $G$. 

We can see that $f$ is a proper coloring of $G$ as well. Pick any $x,y\in V_{G}$ such that $\{x,y\}\in E_{G}$. If $x,y\in V_{G[V_{i}]}$ for some $1\leq i\leq t$, then $f(x)\neq f(y)$, as $f\restriction (V_{G[V_{i}]})$ is a proper coloring. If $x\in V_{G[V_{i}]}$ and $y\in V_{G[V_{j}]}$ such that $i\neq j$, then $f(x)\neq f(y)$, as $f$ is defined in a way such that the sets of colors used to color $G[V_{i}]$ and $G[V_{j}]$ are different.
\end{proof} 

\begin{claim}\label{claim 4.5}
{\em $\chi_{D_{L}}(G[V_{i}])\leq 2r-3$ for any $1\leq i\leq t-1$ and $\chi_{D_{L}}(G[V_{t}])\leq 2(\Delta(G)-r(t-1))-1$.}
\end{claim}

\begin{proof}
Fix $1\leq i\leq t-1$. We will analyze the following cases: 

\noindent \textsc{Case 1:}  Suppose $\vert V_{G[V_{i}]}\vert< r$. As coloring the vertices with distinct colors from their respective lists of size $\vert V_{G[V_{i}]}\vert$ yields a proper distinguishing coloring and $r\geq 7$, we have
\[\chi_{D_{L}}(G[V_{i}])\leq \vert V_{G[V_{i}]}\vert \leq r-1\leq 2r-3.\] 
\noindent \textsc{Case 2:} Suppose $\vert V_{G[V_{i}]}\vert\geq r$. Since $G$ is $(n-r+1)$-connected, we have that $G[V_{i}]$ is connected. We note that $\Delta(G[V_{i}])\leq r-1$.
\vspace{2mm}
\begin{enumerate}
    \item[] \textsc{Subcase 2.1:} Let $\Delta(G[V_{i}])< r-1$.
Then, by Theorem \ref{Theorem 3.1}, we have
\[\chi_{D_{L}}(G[V_{i}])\leq 2(\Delta (G[V_{i}]))\leq 2(r-2)=2r-4.\]
    \item[] \textsc{Subcase 2.2:}
    Let $\Delta(G[V_{i}])= r-1$.
Since $G$ contains no $K_{r-1,r-1}$ as subgraphs, neither does $G[V_{i}]$. 
Thus, $G[V_{i}]$ cannot be $K_{\Delta(G[V_{i}]), \Delta(G[V_{i}])}$. 
Moreover, $G[V_{i}]$ cannot be $C_{6}$ as $\vert V_{G[V_{i}]}\vert\geq r\geq 7$.
Thus, by Theorem \ref{Theorem 3.1}, we have
\[\chi_{D_{L}}(G[V_{i}])\leq \vert V_{G[V_{i}]}\vert \leq r-1\leq 2r-3.\]
\end{enumerate}
Similarly, we can see that $\chi_{D_{L}}(G[V_{t}])\leq 2(\Delta(G)-r(t-1))-1$ since $r-1\leq \Delta(G)-r(t-1)$. 
\end{proof}
By applying Claims \ref{claim 4.4} and \ref{claim 4.5}, we get,
\begin{equation*}
\begin{aligned}
 \chi_{D_{L}}(G) & \leq \sum_{i=1}^{t} \chi_{D_{L}}(G[V_{i}])\\
      & \leq (t-1)(2r-3) + 2(\Delta(G)-r(t-1))-1\\
      & \leq 2\Delta(G)- (3t-2).
\end{aligned}
\end{equation*}
\end{proof}

\section{Two upper bounds and special graphs}

Erd\H{o}s and Hajnal \cite{EH1966} introduced 
the coloring number of a graph $G$. The least integer $k$, such that there exists a well-ordering of the vertices of $G$ in which each vertex has fewer than $k$ neighbors that are earlier in the ordering, is defined as the {\em coloring number} of $G$, denoted by $Col(G)$.  
Erd\H{o}s–Rubin–Taylor \cite{ERT1979} and Vizing \cite{Viz1976} independently introduced the list-chromatic number of a graph $G$. Fix an integer $k$. We say that $G$ is {\em $k$-choosable} if for any assignment $L = {L(v)}_{v\in V_{G}}$ of lists of available colors to the vertices of $G$, there is a proper vertex coloring $f$ of $G$ such that $f(v) \in L(v)$ and $\vert L(v)\vert=k$ for all $v\in V_{G}$. The {\em list chromatic number}
of $G$, denoted by $\chi_{L}(G)$, is the minimum integer $k$ such that $G$ is $k$-choosable. The automorphism group of $G$, denoted by $Aut(G)$, is the group consisting of automorphisms of $G$ with composition as the operation. The {\em join} of graphs $G_{1}$, $G_{2}$, ...,$G_{n}$, denoted by $\bigoplus_{1\leq i\leq n} G_{i}$, has
vertex set $\bigcup_{1\leq i\leq n}V_{G_{i}}$ and edge set 
$\bigcup_{1\leq i\leq n}E_{G_{i}}\cup \{xy: x \in V_{G_{i}}, y \in V_{G_{j}}, i \neq j\}$.

\begin{thm}\label{Theorem 5.1}
{\em The following hold:
\begin{enumerate}
    \item For any graph $G=(V_{G},E_{G})$, we have $\chi_{D_{L}}(G)\leq Col(G)D_{L}(G)$.
    \item There exists an asymmetric graph $G$ such that $\chi_{D_{L}}(G)= Col(G)D_{L}(G)$.
    \item If $Aut(G)\cong \Sigma$, where $\Sigma$ is an abelian group of order $p^{m}$, 
    where $p$ is a prime,
    then $\chi_{D_{L}}(G)\leq \chi_{L}(G)+1$. Moreover, the bound $\chi_{D_{L}}(G)\leq \chi_{L}(G)+1$ is sharp. 
    \item If $Aut(G)\cong \Sigma$, where $\Sigma$ is a finite abelian group so that $Aut(G)\cong \prod_{1\leq i\leq k} \mathbb{Z}_{p_{i}^{n_{i}}}$ for some $k$, where $p_{1},...,p_{k}$ are primes not necessarily distinct, then $\chi_{D_{L}}(G)\leq \chi_{L}(G)+k$.
     Moreover, the bound $\chi_{D_{L}}(G)\leq \chi_{L}(G)+k$ is sharp.
\end{enumerate}
}   
\end{thm}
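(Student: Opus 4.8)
\emph{Parts (1) and (2).}
For (1) put $k=Col(G)$ and $d=D_L(G)$, and let $L$ be an arbitrary list assignment with $|L(v)|=kd$ for all $v$. Fix a well-ordering $v_1<\dots<v_n$ of $V_G$ witnessing $Col(G)=k$, so that each $v_i$ has at most $k-1$ neighbors among $v_1,\dots,v_{i-1}$. I would sweep through the vertices in this order and thin the list of $v_i$ down to a size-$d$ sublist $S(v_i)$ chosen \emph{disjoint} from the sublists already assigned to the earlier neighbors of $v_i$; this is possible because those at most $k-1$ sublists have size $d$, so their union has at most $(k-1)d$ colors and at least $kd-(k-1)d=d$ colors of $L(v_i)$ are still available. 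Then $\{S(v)\}_{v\in V_G}$ is a list assignment all of whose lists have size $d=D_L(G)$, so $G$ admits a distinguishing coloring $f$ with $f(v)\in S(v)\subseteq L(v)$, and $f$ is automatically proper since for every edge the sublist of its earlier endpoint is disjoint from that of its later endpoint. Hence $\chi_{D_L}(G)\leq kd$; there is no real obstacle here beyond the bookkeeping of the sweep. For (2) I would take $G$ to be an asymmetric tree $T$ (which exists on seven vertices): then $Col(T)=2$, $D_L(T)=1$ (size-one lists force a coloring, which is distinguishing because $T$ has no nontrivial automorphism), and $\chi_{D_L}(T)=\chi_D(T)=\chi(T)=2$ by Fact~\ref{Fact:2.1}(1) and asymmetry, so $\chi_{D_L}(T)=2=Col(T)\,D_L(T)$.

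\emph{Parts (3) and (4).}
The plan is to start from a proper $L$-coloring and spend the extra colors to destroy every automorphism, exploiting that $Aut(G)$ is abelian and, in (3), a $p$-group. Since in a $p$-group every nontrivial subgroup contains a subgroup of order $p$, a proper coloring $f$ is distinguishing as soon as it is not invariant under a generator of each minimal subgroup of $Aut(G)$. To kill one such generator $\sigma$ (of order $p$) I would pick a $\langle\sigma\rangle$-orbit $\{u,\sigma u,\dots,\sigma^{p-1}u\}$ and force $f(u)\neq f(\sigma u)$; this is affordable because adding the single edge $\{u,\sigma u\}$ raises the list chromatic number by at most one (color one endpoint arbitrarily, delete that color from the lists of its neighbors in the augmented graph — still of size at least $\chi_L(G)$ — and color the rest using $\chi_L(G-u)\leq\chi_L(G)$), so lists of size $\chi_L(G)+1$ still yield a proper coloring of $G$ with $f(u)\neq f(\sigma u)$. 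If $Aut(G)\cong\mathbb{Z}_{p^n}$ is cyclic it has a unique minimal subgroup and one such step finishes (3). For (4) I would split $Aut(G)$ into its primary components $\Sigma_{p}$ over the distinct primes $p$; since a subgroup of an abelian group is trivial iff it meets each primary component trivially, it suffices to destroy the minimal subgroups of each $\Sigma_{p}$ separately, and by (3) each prime costs only one extra color; as the number of distinct primes among $p_1,\dots,p_k$ is at most $k$, this gives $\chi_{D_L}(G)\leq\chi_L(G)+k$.

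\emph{The main obstacle, and sharpness.}
The delicate case is (3) when the $p$-group $Aut(G)$ is non-cyclic, for instance $\mathbb{Z}_p\times\mathbb{Z}_p$, where there are $p+1$ minimal subgroups but a budget of only one extra color, so killing a single generator is not enough. I expect this to be the crux, and would attack it by using the one difference-forcing constraint \emph{globally} rather than at a single edge — either by placing the added edge inside an orbit that is simultaneously non-trivial for several of the relevant generators (bringing a common fixed point into play), or by a counting argument: once the lists strictly exceed $\chi_L(G)$ there are enough proper $L$-colorings that the ones invariant under some order-$p$ automorphism (such invariance forcing the coloring to be constant on all of that automorphism's nontrivial orbits) cannot cover them all. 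Finally, for the sharpness of (3) and (4) I would construct, for a prescribed abelian ($p$-)group $\Sigma$, a connected graph $G$ with $Aut(G)\cong\Sigma$ and a list assignment of size $\chi_L(G)$ (respectively $\chi_L(G)+k-1$) admitting no proper distinguishing coloring — built by gluing rigid gadgets so that every proper coloring from those lists is forced to respect a symmetry of $G$.
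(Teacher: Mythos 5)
Parts (1) and (2) of your proposal are correct: the disjoint-sublist sweep along a $Col(G)$-witnessing order is exactly the paper's proof of (1), and your asymmetric tree (with $Col=2$, $D_L=1$, $\chi_{D_L}=\chi_D=\chi=2$) is a valid, and in fact simpler, witness for (2) than the paper's Figure~\ref{Figure 1} graph. The genuine gaps are in (3) and (4).

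For the upper bound in (3) you only handle the cyclic case $\mathbb{Z}_{p^m}$; your edge-addition trick there is sound (and cleanly guarantees propriety, which the paper's ``recolor $v$ with the unique new color'' step glosses over), and you are right that a cyclic $p$-group has a unique minimal subgroup so one forced inequality kills everything. You then honestly flag the non-cyclic abelian $p$-group case as unresolved, and indeed the paper's own proof silently assumes cyclicity (``Now $\Sigma\cong\mathbb{Z}_{p^m}$''), matching what \cite[Theorem 4.2]{CHT2009} actually covers; so for (3) itself you are no worse off than the paper. The real damage is to your (4): you reduce (4) to applying (3) to each primary component of $Aut(G)$, but a primary component $\prod_{p_i=p}\mathbb{Z}_{p^{n_i}}$ is in general a \emph{non-cyclic} $p$-group, i.e.\ precisely the case you could not handle, so your proof of (4) rests on an unproven claim. (Your conclusion would also be the strictly stronger bound ``$\chi_L(G)$ plus the number of distinct primes,'' which should make you suspicious; neither your counting heuristic nor the ``common fixed point'' idea is worked out.) The paper avoids this entirely by following \cite[Theorem 4.4]{CHT2009}, which spends one extra color per cyclic factor --- $k$ colors in total --- rather than one per distinct prime.

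The second omission is sharpness, which is half the content of (3) and (4) and is where the paper does most of its work. You only promise to ``glue rigid gadgets.'' The paper constructs explicit examples: the graph $C'_{4n}$, a $4n$-cycle with pendant paths of lengths $1,2,3,4$ attached periodically so that all reflections are destroyed and $Aut(C'_{4n})\cong\mathbb{Z}_n$, for which it proves $\chi_L=2$ and $\chi_{D_L}=3$ (Claim~\ref{Claim 5.2}); and for (4) it takes the join $\bigoplus_i C'_{4p_i^{n_i}}$, computing the automorphism group of the join via \cite[Lemma 5.3]{CHT2009} and using an additivity lemma for $\chi_{D_L}$ of joins (Lemma~\ref{Lemma 5.4}) to get $\chi_{D_L}=3k=\chi_L+k$. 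Without a concrete construction of this kind your treatment of the ``moreover'' clauses in (3) and (4) is missing, not merely sketched.
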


\begin{proof}
(1). Let $c=Col(G)$, $d=D_{L}(G)$, and $l=\{l(v)\}_{v\in V_G}$ be an assignment of lists with $|l(v)| = cd$, for all $v\in V_G$. Let $\{v_{1},...,v_{n}\}$ be an enumeration of $V_{G}$. 
We define an assignment $\{l'(v)\}_{v\in V_{G}}$ of disjoint lists inductively such that $\vert l'(v)\vert = d$ for all $v\in V_G$. 

Let $l'(v_{1})$ be an arbitrary subset of $d$ elements of $l(v_{1})$.
Fix $i\geq 2$. Suppose $l'(v_{1}),..., l'(v_{i-1})$ are defined. We define $l'(v_{i})$. Let $v_{k_1},...,v_{k_r}$ be the $r$ -vertices of $v_{1},...,v_{i-1}$ that are connected to $v_{i}$ by an edge.
Let
    \[X(v_{i}) = l(v_{i})\backslash \bigcup_{1\leq j \leq r} l'(v_{k_{j}}).\]
Since $c>r$, we have
        $\vert X(v_{i}) \vert\geq dc - dr = d(c-r)\geq d.$
Let $l'(v_{i})$ be a $d$-element subset of $X(v_{i})$.

Since $d=D_{L}(G)$, there is a distinguishing coloring $f$ of $G$ such that $f(v)\in l'(v)$ for all $v\in V_{G}$.
By the definition of $\{l'(v)\}_{v\in V_{G}}$, $f$ is a proper coloring as well.

(2). Consider the graph $G$ in Figure \ref{Figure 1}, where $Col(G) = 3$, $D_{L}(G) = 1$, and $\chi_{D_{L}}(G) = 3$.

(3). We slightly modify the methods of Collins, Hovey, and Trenk \cite[Theorem 4.2]{CHT2009}. Let $l=\{l(v)\}_{v\in V_G}$ be an assignment of lists of size
$\chi_{L}(G)+1$. Define $l'(v):=init_{\chi_{L}(G)}(l(v))$ and $l'=\{l'(v)\}_{v\in V_{G}}$. Let $f$ be a proper coloring of $G$ where $f(v) \in l'(v)$ for all $v\in V_{G}$.
Now $\Sigma\cong \mathbb{Z}_{p^{m}}$. If $\sigma$ is a generator of $Aut(G)$, then $\tau=\sigma^{p^{m-1}}$ is a nontrivial automorphism of $G$. Let $\omega$ be a nontrivial automorphism of $G$ of order $p^{t}$ for $t\geq 2$. Clearly, $\tau$ is a power of $\omega$. Since $\tau$ is a nontrivial automorphism, there is a vertex $v$ not fixed by $\tau$. Then $v$ is not fixed by $\omega$ as well. We recolor $v$ by the unique new color from $l(v)\backslash l'(v)$. This new coloring is an $l$-proper distinguishing coloring of $G$.

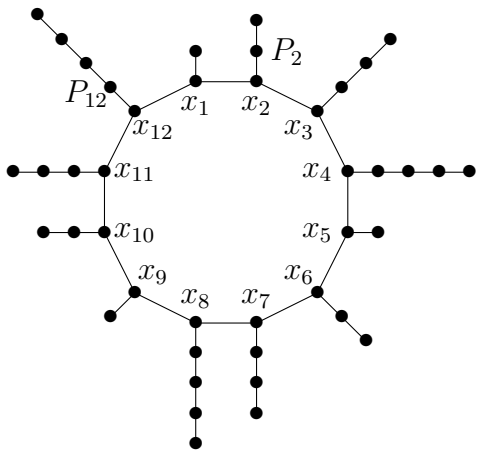
\begin{figure}[!ht]
\centering
\begin{minipage}{\textwidth}
\centering
\begin{tikzpicture}[scale=0.8]
\draw[black,] (-0.5,2) -- (0.5,2);
\draw[black,] (-0.5,2) -- (-0.5,2.5);

\draw[black,] (0.5,2) -- (1.5,1.5);
\draw[black,] (0.5,2) -- (0.5,3);

\draw[black,] (1.5,1.5) -- (2,0.5);
\draw[black,] (1.5,1.5) -- (2.7,2.7);

\draw[black,] (2, 0.5) -- (2, -0.5);
\draw[black,] (2, 0.5) -- (4,0.5);

\draw[black,] (2, -0.5) -- (1.5, -1.5);
\draw[black,] (2, -0.5) -- (2.5,-0.5);

\draw[black,] (1.5, -1.5) -- (0.5, -2);
\draw[black,] (1.5, -1.5) -- (2.3, -2.3);

\draw[black,] (0.5,-2) -- (-0.5,-2);
\draw[black,] (0.5, -2) -- (0.5, -3.5);

\draw[black,] (-0.5,-2) -- (-1.5,-1.5);
\draw[black,] (-0.5, -2) -- (-0.5, -4);

\draw[black,] (-1.5,-1.5) -- (-2,-0.5);
\draw[black,] (-1.5,-1.5) -- (-1.9, -1.9);

\draw[black,] (-2, -0.5) -- (-2, 0.5);
\draw[black,] (-2, -0.5) -- (-3, -0.5);

\draw[black,] (-2, 0.5) -- (-1.5, 1.5);
\draw[black,] (-2, 0.5) -- (-3.5, 0.5);

\draw[black,] (-1.5, 1.5) -- (-0.5, 2);
\draw[black,] (-1.5, 1.5) -- (-3.1, 3.1);

\draw (-0.5,2) node {$\bullet$};
\draw (-0.5,1.6) node {$x_{1}$};
\draw (-0.5,2.5) node {$\bullet$};

\draw (0.5,2) node {$\bullet$};
\draw (0.5,1.6) node {$x_{2}$};
\draw (0.5,2.5) node {$\bullet$};
\draw (1,2.5) node {$P_{2}$};
\draw (0.5,3) node {$\bullet$};

\draw (1.5,1.5) node {$\bullet$};
\draw (1.2,1.2) node {$x_{3}$};
\draw (1.9,1.9) node {$\bullet$};
\draw (2.3,2.3) node {$\bullet$};
\draw (2.7,2.7) node {$\bullet$};

\draw (2,0.5) node {$\bullet$};
\draw (1.5,0.5) node {$x_{4}$};
\draw (2.5,0.5) node {$\bullet$};
\draw (3,0.5) node {$\bullet$};
\draw (3.5,0.5) node {$\bullet$};
\draw (4,0.5) node {$\bullet$};

\draw (2,-0.5) node {$\bullet$};
\draw (1.5,-0.5) node {$x_{5}$};
\draw (2.5,-0.5) node {$\bullet$};

\draw (1.5,-1.5) node {$\bullet$};
\draw (1.2,-1.2) node {$x_{6}$};
\draw (1.9,-1.9) node {$\bullet$};
\draw (2.3,-2.3) node {$\bullet$};

\draw (0.5,-2) node {$\bullet$};
\draw (0.5,-1.6) node {$x_{7}$};
\draw (0.5,-2.5) node {$\bullet$};
\draw (0.5,-3) node {$\bullet$};
\draw (0.5,-3.5) node {$\bullet$};

\draw (-0.5,-2) node {$\bullet$};
\draw (-0.5,-1.6) node {$x_{8}$};
\draw (-0.5,-2.5) node {$\bullet$};
\draw (-0.5,-3) node {$\bullet$};
\draw (-0.5,-3.5) node {$\bullet$};
\draw (-0.5,-4) node {$\bullet$};

\draw (-1.5,-1.5) node {$\bullet$};
\draw (-1.2,-1.2) node {$x_{9}$};
\draw (-1.9,-1.9) node {$\bullet$};

\draw (-2,-0.5) node {$\bullet$};
\draw (-1.5,-0.5) node {$x_{10}$};
\draw (-2.5,-0.5) node {$\bullet$};
\draw (-3,-0.5) node {$\bullet$};

\draw (-2,0.5) node {$\bullet$};
\draw (-1.5,0.5) node {$x_{11}$};
\draw (-2.5,0.5) node {$\bullet$};
\draw (-3,0.5) node {$\bullet$};
\draw (-3.5,0.5) node {$\bullet$};

\draw (-1.5,1.5) node {$\bullet$};
\draw (-1.2,1.2) node {$x_{12}$};
\draw (-1.9,1.9) node {$\bullet$};
\draw (-2.3,2.3) node {$\bullet$};
\draw (-2.3,1.8) node {$P_{12}$};
\draw (-2.7,2.7) node {$\bullet$};
\draw (-3.1,3.1) node {$\bullet$};
\end{tikzpicture}
\end{minipage}
\caption{\em The graph $C'_{12}$, where $Aut(C'_{12})=\mathbb{Z}_{3}$.}
\label{Figure 4}
\end{figure}

For the second assertion, consider the cycle graph $C_{4n}=(V_{C_{4n}}, E_{C_{4n}})$ for any integer $n\geq 2$. Let $x_{1},...,x_{4n}$ be the vertices of $V_{C_{4n}}$ in a clockwise order.
We construct a graph ${C'_{4n}}$ by adding to each $x_k$ a path $P_{k}$ of length $r$, where $r \in \{1, 2, 3, 4\}$ and $k \equiv r$ (mod 4) (see Figure \ref{Figure 4}).
Now, $Aut(C_{4n})$ is isomorphic to the dihedral group of order $8n$. The added paths in ${C'_{4n}}$ break all the reflectional symmetries of $C_{4n}$ since any 3 consecutive nodes in $C_{4n}$ have different added paths. However, the added paths of ${C'_{4n}}$ do not break all of the symmetries of $C_{4n}$ due to its periodicity. Since any automorphism of ${C'_{4n}}$ can only send a node $x\in V_{C_{4n}}$ to a node $y\in V_{C_{4n}}$ such that $P_{x}$ and $P_{y}$ are of the same length, ${C'_{4n}}$ keeps $n$ out of the $4n$ rotational symmetries of $C_{4n}$. Thus, $Aut({C'_{4n}})\cong \mathbb{Z}_n$.
 
\begin{claim}\label{Claim 5.2}
    {\em $\chi_L({C'_{4n}}) = 2$ and $\chi_{D_L}({C'_{4n}}) = 3$.}
\end{claim}	
\begin{proof}
Let $H=C'_{4n}$ and
$l=\{l(v)\}_{v\in V_{H}}$ be an assignment of lists of size $2$.
Let $f$ be a proper coloring of $C_{4n}$ such that $f(v) \in l(v)$ for all $v\in V_{C_{4n}}$.\footnote{as if $C$ is an even cycle then the list chromatic number $\chi_{L}(C)$ of $C$ is 2.} 
We extend $f$ to a proper coloring of $H$ in a way such that the adjacent vertices of the $P_{i}$'s get different colors from their respective lists. So, $\chi_L(H) = 2$.
We show $\chi_{D_L}(H)= 3$. Any $2$-proper  coloring $f$ of $H$ must color alternately each of the vertices in both $C_{4n}$ and the added paths. 
So, there is a nontrivial $f$-preserving automorphism of $H$ that maps a node $x\in V_{C_{4n}}$ to a node $y\in V_{C_{4n}}$ so that $P_{x}$ and $P_{y}$ have the same length.
Thus, $\chi_{D_L}(H)\geq \chi_{D}(H)> 2$.
We prove $\chi_{D_L}(H)\leq 3$. 
Let $l=\{l(v)\}_{v\in V_{H}}$ be an assignment of lists of size $3$. 
By Theorem \ref{Theorem 2.7}, $C_{4n}$ is properly $l'$-distinguishable if $l'=\{l(v)\}_{v\in V_{C_{4n}}}$. We extend $f$ to a proper distinguishing coloring of $H$ such that the adjacent vertices of $P_{i}$'s get different colors from their respective lists.
\end{proof}

(4). The bound $\chi_{D_{L}}(G)\leq \chi_{L}(G)+k$ follows from the arguments of \cite[Theorem 4.4]{CHT2009} and the proof of the first assertion of (3). For the second assertion, we prove that given a finite abelian group $\Gamma=\prod_{1\leq i\leq k} \mathbb{Z}_{p_{i}^{n_{i}}}$ where $n_{1},...,n_{k}$ are different positive integers, there exists a graph $H$ such that $Aut(H)=\Gamma$ and $\chi_{D_{L}}(H)=\chi_{L}(H)+k$.
Let $H_{i}=C'_{4p_{i}^{n_{i}}}$ for all $1\leq i\leq k$ where $C'_{4p_{i}^{n_{i}}}$ is the graph constructed in (3). Let $H = \bigoplus_{1\leq i\leq k} H_{i}$.  
Fix any $1\leq i\leq k$. By Claim \ref{Claim 5.2}, we have $\chi_L({H_{i}}) = 2$ and $\chi_{D_L}({H_{i}}) = 3$. We note that $H_{i}$ is triangle free and is not a complete bipartite graph. Moreover, $H_{i}\not\cong H_{j}$ if $i\neq j$. We recall the following lemma.

\begin{lem}\label{Lemma 5.3}{(Collins, Hovey, and Trenk \cite[Lemma 5.3]{CHT2009})}
{\em Suppose each of the graphs $G_{1},...,G_{n}$, is triangle free, and is not a
complete bipartite graph, and also suppose $G_{i} \not\cong G_{j}$ whenever $i \neq j$. Then, $Aut(\bigoplus_{1\leq i\leq n} G_{i}) = \prod_{i=1}^{n} Aut(G_{i})$.}
\end{lem}

By applying the techniques in \cite[Corollary 5.5]{CHT2009} due to Collins, Hovey, and Trenk and the arguments of Claim \ref{claim 4.4}, we can prove the following lemma:

\begin{lem}\label{Lemma 5.4}
{\em If $Aut(\bigoplus_{1\leq i\leq n} G_{i}) = \prod_{i=1}^{n} Aut(G_{i})$, then $\chi_{D_{L}}(\bigoplus_{1\leq i\leq n} G_{i})=\sum_{i=1}^{n}\chi_{D_{L}}(G_{i})$. }
\end{lem}

By Lemma \ref{Lemma 5.3}, $Aut(H)=\prod_{i=1}^{k} \mathbb{Z}_{p_{i}^{n_{i}}}$. We apply Lemma \ref{Lemma 5.4} to the graph $H$ to conclude that
\[\chi_{D_{L}}(H)=\sum_{i=1}^{k} \chi_{D_{L}}(H_{i})=3k \text{, and } \chi_{L}(H)=\sum_{i=1}^{k} \chi_{L}(H_{i})=2k.\]
\end{proof}

\begin{question}
{\em Does there exist a graph $G$ 
such that $\chi_{D_{L}}(G)=Col(G)D_{L}(G)$ and $G$ is not asymmetric?} 
\end{question}

\subsection{Book graphs and Friendship graphs}

\begin{defn}\label{Definition 5.5}
The {\em $n$-book graph $B_{n}$} ($n \geq 2$) (see Figure \ref{Figure 5}) is defined as the Cartesian product of the star graph $K_{1,n}$ and the path graph $P_{2}$.
We call every $C_{4}$ in the book graph $B_{n}$ a page of $B_{n}$. The $n$ pages of $B_{n}$ are denoted by $v_{0}w_{0}v_{i}w_{i}$ for $1\leq i\leq n$. 
The {\em friendship
graph $F_{n}$}  $(n \geq 2)$ is obtained by joining $n$ copies of the cycle graph $C_{3}$ with a
common vertex $w$ (see Figure \ref{Figure 6}).
\end{defn}

Alikhani and Soltani \cite{AS, AS2017} proved that $D_{L}(F_{n})= D(F_{n})=\lceil\frac{1+\sqrt{8n+1}}{2}\rceil$ and $D_{L}(B_{n})= D(B_{n})=\lceil \sqrt{n}\,\rceil$ for any $n\geq 2$. 
Define
\begin{itemize}
    \item $A=\{n: (\lceil \sqrt{n}\,\rceil)^2-(\lceil \sqrt{n}\,\rceil) + 2 \leq n \leq (\lceil \sqrt{n}\,\rceil)^2\}$,
    \item $B= \{n: (\lceil \sqrt{n}\,\rceil-1)^2 < n \leq (\lceil \sqrt{n}\,\rceil)^2-(\lceil \sqrt{n}\,\rceil)+1\}$.
\end{itemize}

The purpose of this section is to illustrate
the fact that the values of $\chi_{D_L}(B_n)$ and $\chi_{D}(B_n)$ depend on whether $n\in A$ or $n\in B$, while the values of $D_L(B_n)$ and $D(B_n)$ have a unique form in each case, independently of the value of $n$ (see \cite{AS, AS2017}).
We prove that the entries in the following table are correct (see Theorem \ref{Theorem 5.11} and Remark \ref{Remark 5.12}).
    
\begin{center}
      \begin{tabular}{|ll|l|l|}
        \hline
             & Graph ($G$) & $\chi_{D}(G)$ &$\chi_{D_L}(G)$\\
             \hline\hline
             & Book graph $(B_{n})$ if $n\in A$ & $2+ \lceil\sqrt{n}\,\rceil$ & $2+ \lceil\sqrt{n}\,\rceil$ \\
             
             & Book graph $(B_{n})$ if $n\in B$ & $1+ \lceil\sqrt{n}\,\rceil$ & $1+ \lceil\sqrt{n}\,\rceil$ \\\
             
             & Friendship graph $(F_{n})$  &$1+\lceil\frac{1+\sqrt{8n+1}}{2}\rceil$ & $1+\lceil\frac{1+\sqrt{8n+1}}{2}\rceil$\\
             
        \hline
        \end{tabular}
        \\
       Table 2. 
        \label{tab:my_label}
    \end{center}  

\begin{observation}\label{Observation 5.6}
    For $n\geq 2$, let $f$ be a coloring of the set of vertices of $B_n$. Then $f$ is a proper distinguishing coloring of $B_n$ if and only if the following holds:
    \begin{enumerate}
        \item[$(a)$] $(f(v_i), f(w_i))\neq (f(v_j), f(w_j)) $ for $0< i\neq j\leq n$,
        \item[$(b)$] $f(v_i)\neq f(w_i)$ for $i\leq n$,
        \item[$(c)$] $f(v_i)\neq f(v_0)$ and $f(w_i)\neq f(w_0)$ for $0<i\leq n$. 
    \end{enumerate}
    Condition $(a)$ makes $f$ distinguishing, whereas conditions $(b)$ and $(c)$ make it proper.
\end{observation}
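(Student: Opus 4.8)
The plan is to first identify the edge set and the automorphism group of $B_n$, and then to read off the three conditions from these two facts.

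First I would record the edges. Writing $B_n = K_{1,n}\,\square\,P_2$ with $v_i=(i,a)$ and $w_i=(i,b)$, the edge set is exactly $\{v_iw_i:0\le i\le n\}\cup\{v_0v_i:1\le i\le n\}\cup\{w_0w_i:1\le i\le n\}$. Hence $f$ is a proper coloring of $B_n$ if and only if $f(v_i)\ne f(w_i)$ for every $0\le i\le n$ and $f(v_0)\ne f(v_i)$, $f(w_0)\ne f(w_i)$ for every $1\le i\le n$; this is precisely the conjunction of $(b)$ (read, as the explicit ``$0<$'' in $(a)$ and $(c)$ suggests, for $0\le i\le n$) and $(c)$. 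So properness is equivalent to $(b)\wedge(c)$, and the remaining task is to show that, among proper colorings, the distinguishing ones are exactly those also satisfying $(a)$.

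Next I would describe $\mathrm{Aut}(B_n)$. For $n\ge 2$ the vertices $v_0,w_0$ are the only ones of degree $n+1\ge 3$, every other vertex having degree $2$, so every automorphism permutes $\{v_0,w_0\}$. For each $\pi\in S_n$ the map $\sigma_\pi$ fixing $v_0,w_0$ and sending $v_i\mapsto v_{\pi(i)}$, $w_i\mapsto w_{\pi(i)}$ is an automorphism, as is the flip $\iota$ exchanging $v_i\leftrightarrow w_i$ for all $0\le i\le n$. Conversely, if $\phi$ fixes $v_0$ it fixes $w_0$ and permutes $N(v_0)\setminus\{w_0\}=\{v_1,\dots,v_n\}$, inducing some $\pi\in S_n$; since $w_i$ is the unique neighbor of $v_i$ other than $v_0$, this forces $\phi=\sigma_\pi$. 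If instead $\phi$ exchanges $v_0$ and $w_0$, then $\iota\circ\phi$ fixes $v_0$, so $\phi=\iota\circ\sigma_\pi$ for some $\pi$. Thus $\mathrm{Aut}(B_n)=\{\sigma_\pi:\pi\in S_n\}\cup\{\iota\circ\sigma_\pi:\pi\in S_n\}$.

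Finally I would combine these. For the forward direction, assume $f$ is proper and distinguishing; then $(b)$ and $(c)$ hold by properness, and if $(a)$ failed, say $(f(v_i),f(w_i))=(f(v_j),f(w_j))$ with $i\ne j$ in $\{1,\dots,n\}$, then $\sigma_{(i\,j)}$ would be a nontrivial $f$-preserving automorphism, a contradiction, so $(a)$ holds. For the converse, assume $(a),(b),(c)$; then $f$ is proper, and $(b)$ at $i=0$ gives $f(v_0)\ne f(w_0)$, so no automorphism of the form $\iota\circ\sigma_\pi$ can preserve $f$ (any such would identify the colors of $v_0$ and $w_0$); moreover a $\sigma_\pi$ preserving $f$ must have the pair $(f(v_i),f(w_i))$ constant along the orbits of $\pi$, which by $(a)$ forces $\pi=\mathrm{id}$. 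Hence the only $f$-preserving automorphism is the identity, i.e. $f$ is distinguishing. I do not expect a genuine obstacle here; the only points needing care are the computation of $\mathrm{Aut}(B_n)$ (where the degree argument uses $n\ge 2$) and the convention that $(b)$ includes $i=0$, which is exactly what makes $f$ proper along the spine edge $v_0w_0$ and simultaneously kills the flip-type automorphisms.
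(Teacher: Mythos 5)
Your proof is correct and follows the same (implicit) reasoning as the paper, which states this as an unproved observation; you simply supply the details --- the edge set of $K_{1,n}\,\square\,P_2$ giving $(b)\wedge(c)\Leftrightarrow$ properness, and the computation $\mathrm{Aut}(B_n)=\{\sigma_\pi,\iota\circ\sigma_\pi:\pi\in S_n\}$ giving the equivalence with $(a)$. One small point in your favour: you correctly observe that the flip-type automorphisms $\iota\circ\sigma_\pi$ are excluded by $(b)$ at $i=0$ (i.e.\ $f(v_0)\neq f(w_0)$) rather than by $(a)$, which is more precise than the paper's closing gloss that ``condition $(a)$ makes $f$ distinguishing''.
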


\begin{figure}[!ht]\label{Figure 5}
\centering
\begin{minipage}{\textwidth}
\centering
\begin{tikzpicture}[scale=1]
\draw[black,] (0,0) -- (0,-1.5);

\draw[black,] (-1,0.5) -- (-1,-1);
\draw[black,] (1,0.5) -- (1,-1);

\draw[black,] (2,0.5) -- (2,-1);
\draw[black,] (-2,0.5) -- (-2,-1);

\draw[black,] (5,0.5) -- (5,-1);
\draw[black,] (-5,0.5) -- (-5,-1);

\draw[black,] (0,0) -- (1,0.5);
\draw[black,] (0,0) -- (2,0.5);
\draw[black,] (0,0) -- (5,0.5);
\draw[black,] (0,0) -- (-1,0.5);
\draw[black,] (0,0) -- (-2,0.5);
\draw[black,] (0,0) -- (-5,0.5);

\draw[black,] (0,-1.5) -- (1,-1);
\draw[black,] (0,-1.5) -- (2,-1);
\draw[black,] (0,-1.5) -- (5,-1);
\draw[black,] (0,-1.5) -- (-1,-1);
\draw[black,] (0,-1.5) -- (-2,-1);
\draw[black,] (0,-1.5) -- (-5,-1);

\node[circ] at (0,0) {$v_{0}$};
\node[circ] at (0,-1.5) {$w_{0}$};
\node[circ] at (1,0.5) {$v_{2}$};
\node[circ] at (1,-1) {$w_{2}$};
\node[circ] at (2,0.5) {$v_{4}$};
\node[circ] at (2,-1) {$w_{4}$};
\node[circ] at (5,0.5) {$v_{6}$};
\node[circ] at (5,-1) {$w_{6}$};

\node[circ] at (-1,0.5) {$v_{1}$};
\node[circ] at (-1,-1) {$w_{1}$};
\node[circ] at (-2,0.5) {$v_{3}$};
\node[circ] at (-2,-1) {$w_{3}$};
\node[circ] at (-5,0.5) {$v_{5}$};
\node[circ] at (-5,-1) {$w_{5}$};

\end{tikzpicture}
\end{minipage}
\caption{\em Book graph $B_{6}$.}
\label{Figure 5}
\end{figure}
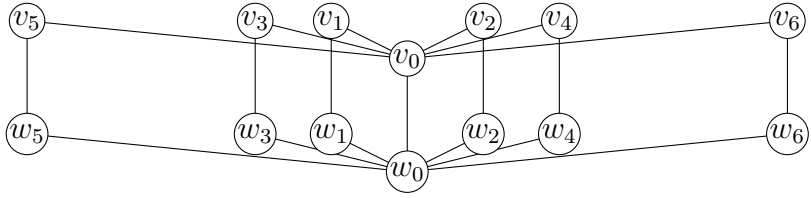

\begin{lem}\label{Lemma 5.7}
 {\em For all $n\geq 2$, $\chi_{D}(B_n) \leq \lceil \sqrt{n}\,\rceil + 2$.}
\end{lem}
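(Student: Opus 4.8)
The plan is to exhibit, for every $n \ge 2$, a proper distinguishing coloring of $B_n$ using at most $q+2$ colors where $q = \lceil\sqrt{n}\,\rceil$, and then invoke Observation~\ref{Observation 5.6} to verify that the coloring works. First I would fix the color set $\{0, 1, \dots, q+1\}$ and color the spine by $f(v_0) = 0$ and $f(w_0) = 1$; by Observation~\ref{Observation 5.6}(c) this only forbids color $0$ on the $v_i$'s and color $1$ on the $w_i$'s. The key point is counting: the number of admissible ordered pairs $(f(v_i), f(w_i))$ with $f(v_i) \neq 0$, $f(w_i) \neq 1$, and $f(v_i) \neq f(w_i)$ (the constraints from Observation~\ref{Observation 5.6}(b),(c)) must be at least $n$, so that we can assign $n$ pairwise distinct such pairs to the $n$ pages, satisfying Observation~\ref{Observation 5.6}(a).

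The counting step is the heart of the argument. With $q+2$ colors available, the $v_i$ may take any of the $q+1$ values in $\{1,\dots,q+1\}$ and $w_i$ any of the $q+1$ values in $\{0,2,3,\dots,q+1\}$; among these $(q+1)^2$ ordered pairs I must discard those with $f(v_i) = f(w_i)$, of which there are exactly $q$ (namely the pairs $(c,c)$ for $c \in \{2,\dots,q+1\}$). Hence the number of usable pairs is $(q+1)^2 - q = q^2 + q + 1$. Since $q = \lceil\sqrt n\,\rceil \ge \sqrt n$, we have $q^2 + q + 1 > q^2 \ge n$, so there are strictly more than $n$ admissible pairs; choosing $n$ of them to be pairwise distinct and assigning them to pages $1,\dots,n$ gives the desired coloring. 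By Observation~\ref{Observation 5.6}, $f$ is a proper distinguishing coloring of $B_n$ with $q+2$ colors, whence $\chi_D(B_n) \le \lceil\sqrt n\,\rceil + 2$.

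I expect the only genuine subtlety — and the step to state carefully rather than the step that is hard — is making sure the admissible-pair count is done with the spine constraints already baked in, so that no page-to-spine edge violates properness; once Observation~\ref{Observation 5.6} is in hand this is purely bookkeeping, and the inequality $q^2 + q + 1 > n$ is immediate from $q \ge \sqrt n$. (One could even note the sharper bound $(q+1)^2 - q \ge n$ leaves room to spare, which foreshadows the refinement in the $n \in B$ case where $q+1$ colors suffice.) No case analysis on the parity or residue of $n$ is needed for this weaker bound.
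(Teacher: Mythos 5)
Your proof is correct, but it takes a different route from the paper's. The paper proves Lemma \ref{Lemma 5.7} by starting from a $\lceil\sqrt{n}\,\rceil$-distinguishing (not necessarily proper) coloring $f$ of $B_n$ — whose existence is the cited result $D(B_n)=\lceil\sqrt{n}\,\rceil$ of Alikhani and Soltani — and then repairing it: the spine gets two fresh colors $\lceil\sqrt{n}\,\rceil+1$ and $\lceil\sqrt{n}\,\rceil+2$, and any page with $f(v_i)=f(w_i)$ has $v_i$ recolored to $\lceil\sqrt{n}\,\rceil+2$; one checks this preserves the distinctness of the pairs $(f(v_i),f(w_i))$ while making the coloring proper. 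You instead count admissible ordered pairs directly: with $q+2$ colors and the spine fixed, there are $(q+1)^2-q=q^2+q+1\ge n$ pairs satisfying conditions $(a)$--$(c)$ of Observation \ref{Observation 5.6}, so $n$ distinct ones can be assigned to the pages. Your count is right (the excluded diagonal pairs are exactly the $q$ pairs $(c,c)$ with $c\notin\{f(v_0),f(w_0)\}$), and your argument is self-contained in that it does not need the value of $D(B_n)$; it is essentially the same inclusion--exclusion count that the paper itself deploys in Lemma \ref{Lemma 5.9} (where $m^2+m+1$ admissible pairs are available with one fewer color), so in effect you have folded Lemma \ref{Lemma 5.7} into the counting framework of Lemma \ref{Lemma 5.9}, whereas the paper keeps the two arguments separate and gets Lemma \ref{Lemma 5.7} more quickly by leaning on the known distinguishing number.
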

\begin{proof}
    Consider a $\lceil \sqrt{n}\,\rceil$-distinguishing coloring $f$ of $B_n$. Using $f$, we define a $(\lceil \sqrt{n}\,\rceil +2)$-proper distinguishing coloring of $B_n$. Let $C= \{1,\dots,\lceil \sqrt{n}\,\rceil+2\}$ be a color set. Since $f$ is distinguishing, for each pair $(v_i,w_i)$ and $(v_j,w_j)$ with $0<i\neq j$, we must have $(f(v_i),f(w_i))\neq (f(v_j),f(w_j))$. Define $f': B_n \to C $ such that $f'(v_0) = \lceil \sqrt{n}\,\rceil+1$, $f'(w_0)= \lceil \sqrt{n}\,\rceil+2$, and
    \begin{align*}
        f'(v_i) &=\begin{cases}
            \lceil \sqrt{n}\,\rceil+2 & \text{ if } f(v_i) = f(w_i),\\
            f(v_i) & \text{ otherwise. }
        \end{cases}    
    \end{align*}
    Clearly, $f'$ is a $(\lceil \sqrt{n}\,\rceil+2)$-proper distinguishing coloring.
\end{proof}
\begin{lem}\label{Lemma 5.8}
   {\em For all $n\geq 2$, $\lceil \sqrt{n}\,\rceil < \chi_{D}(B_n)$.}
\end{lem}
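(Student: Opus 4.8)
The plan is to argue by contradiction, converting a hypothetical small proper distinguishing colouring into a counting statement via Observation~\ref{Observation 5.6}. Suppose $\chi_D(B_n)\le k$, where $k=\lceil\sqrt n\,\rceil$, and fix a $k$-proper distinguishing colouring $f\colon V_{B_n}\to\{1,\dots,k\}$. Since $v_0w_0\in E_{B_n}$ and $f$ is proper, $f(v_0)\neq f(w_0)$. By Observation~\ref{Observation 5.6}(b),(c), for each page index $1\le i\le n$ the pair $\bigl(f(v_i),f(w_i)\bigr)$ is an ordered pair of distinct colours from $\{1,\dots,k\}$ with $f(v_i)\neq f(v_0)$ and $f(w_i)\neq f(w_0)$; and by part~(a) of the same observation the $n$ pairs $\bigl(f(v_1),f(w_1)\bigr),\dots,\bigl(f(v_n),f(w_n)\bigr)$ are pairwise distinct.

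Next I would count the \emph{admissible} pairs, meaning those $(x,y)\in\{1,\dots,k\}^2$ with $x\neq y$, $x\neq f(v_0)$ and $y\neq f(w_0)$. Starting from the $k(k-1)$ pairs with $x\neq y$, removing the $k-1$ of them with $x=f(v_0)$ and the $k-1$ of them with $y=f(w_0)$, and adding back the unique pair $\bigl(f(v_0),f(w_0)\bigr)$ that was removed twice (it does satisfy $x\neq y$ because $f(v_0)\neq f(w_0)$), inclusion--exclusion yields exactly $k(k-1)-2(k-1)+1=k^2-3k+3$ admissible pairs. Since the $n$ page pairs are pairwise distinct and each is admissible, this forces $n\le k^2-3k+3$.

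Finally I would contradict this with the arithmetic of $k=\lceil\sqrt n\,\rceil$. This identity gives $(k-1)^2<n$, and because $n\ge 2$ forces $k\ge 2$, we have $(k-1)^2=k^2-2k+1\ge k^2-3k+3$; hence $n>k^2-3k+3$, which is impossible. Therefore $\chi_D(B_n)>\lceil\sqrt n\,\rceil$. I do not expect a real obstacle: the only steps needing care are reading off the three conditions of Observation~\ref{Observation 5.6} correctly and checking the elementary inequality $(k-1)^2\ge k^2-3k+3$ for $k\ge 2$, with $f(v_0)\neq f(w_0)$ used both to make the inclusion--exclusion count exact and to guarantee that every page pair is among the $k^2-3k+3$ admissible ones.
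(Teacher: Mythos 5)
Your proof is correct and follows essentially the same route as the paper: both invoke Observation~\ref{Observation 5.6} to reduce the problem to counting admissible ordered colour pairs for the pages, arrive at the same bound $k^2-3k+3$ (the paper writes it as $(m+1)^2-3(m+1)+3$ with $k=m+1$), and derive the contradiction from $(k-1)^2<n$. Your inclusion--exclusion justification of the count is slightly more explicit than the paper's, but the argument is the same.
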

\begin{proof}
    Since $n\geq 2$, we may assume that $\lceil \sqrt{n}\,\rceil = m+1$, for some $0\neq m \in \mathbb{N}$. Suppose we have a proper distinguishing coloring $f$ using only $m+1$ many colors with color set $C=\{1,\dots, m+1\}$. Then conditions $(a)-(c)$ must hold for $f$ from Observation \ref{Observation 5.6}.    
    The total number of possibilities to color $(v_i,w_i)$ is $(m+1)^2$ for any $0<i\leq n$. But $f$ cannot use those pairs $(a,b)\in C\times C$ for which either $a=b$ or $a=f(v_0)$ or $b=f(w_0)$. Thus, the number of color pairs $(a,b)\in C \times C$ that are allowed to color $(v_i,w_i)$ is $(m+1)^2 - 3(m+1) + 3$ (i.e., identical pairs, pairs with the first coordinate $f(v_0)$, and pairs with the second coordinate $f(w_0)$ have to be avoided). But then
$$
     (m+1)^2 - 3(m+1) + 3= m^2 + 1 -m \leq m^2<n.
$$
Consequently, $(a)$ in Observation \ref{Observation 5.6} cannot be satisfied, i.e., there will be pairs $(v_i,w_i)$ and $(v_j,w_j)$ such that $(f(v_i), f(w_i)) = (f(v_j), f(w_j))$.
\end{proof}

\begin{lem}\label{Lemma 5.9}
   {\em Let $B_n$ be such that $\lceil \sqrt{n}\,\rceil = m+1$, for some $0\neq m\in \mathbb{N}$. Then $\chi_{D}(B_n) < \lceil \sqrt{n}\,\rceil + 2$ if and only if $m^2 < n \leq m^2 + m+1$.}
\end{lem}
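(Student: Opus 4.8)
The plan is to turn the statement into an elementary counting question and let Observation~\ref{Observation 5.6} do the translation. First, by Lemmas~\ref{Lemma 5.7} and~\ref{Lemma 5.8} and integrality, $\lceil\sqrt{n}\,\rceil < \chi_D(B_n) \le \lceil\sqrt{n}\,\rceil+2$, so $\chi_D(B_n)\in\{m+2,m+3\}$; hence the inequality $\chi_D(B_n) < \lceil\sqrt{n}\,\rceil + 2$ is equivalent to $\chi_D(B_n)\le m+2$, i.e.\ to the existence of a proper distinguishing coloring of $B_n$ from a palette $C$ with $|C|\le m+2$, and (padding $C$ if needed) we may assume $|C| = m+2$. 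Since the hypothesis $\lceil\sqrt{n}\,\rceil = m+1$ already gives $m^2 < n$, it remains to show that such a coloring exists exactly when $n \le m^2+m+1$.

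The next step is to read off from Observation~\ref{Observation 5.6} exactly what such a coloring is. Having fixed $\alpha = f(v_0)$ and $\beta = f(w_0)$, which must satisfy $\alpha\ne\beta$ because $v_0w_0\in E_{B_n}$, the pages are controlled only through their pairs $(f(v_i),f(w_i))$, which by conditions $(a)$–$(c)$ must be pairwise distinct and must each lie in
\[
\mathcal P = \{(a,b)\in C\times C : a\ne b,\ a\ne\alpha,\ b\ne\beta\}.
\]
A one-line inclusion–exclusion, using $\alpha\ne\beta$, gives $|\mathcal P| = (m+2)^2 - 3(m+2) + 3 = m^2+m+1$, independently of the choice of $\alpha,\beta$.

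Both directions then close immediately. For the ``if'' direction, assuming $m^2 < n \le m^2+m+1$, take $C = \{1,\dots,m+2\}$, set $f(v_0)=1$, $f(w_0)=2$, choose any $n$ distinct elements of $\mathcal P$, and use them as the pairs $(f(v_i),f(w_i))$; Observation~\ref{Observation 5.6} certifies this is a proper distinguishing coloring using $m+2 < \lceil\sqrt{n}\,\rceil+2$ colors. For the ``only if'' direction, if $\chi_D(B_n)\le m+2$, then in the corresponding coloring the $n$ pairs $(f(v_i),f(w_i))$ are distinct members of $\mathcal P$, so $n \le |\mathcal P| = m^2+m+1$, which together with $m^2 < n$ is precisely the claimed range.

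I do not expect a real obstacle here — the argument is bookkeeping. The one subtlety worth flagging is that $v_0$ and $w_0$ are adjacent, so $f(v_0)\ne f(w_0)$ is forced; this is exactly why $|\mathcal P| = m^2+m+1$ rather than the $m^2+m$ one would get if equal colors at $v_0,w_0$ were permitted, and hence it is what pins down the upper endpoint $m^2+m+1$ of the interval. One should also be careful to enlarge the palette to size exactly $m+2$ in the ``if'' direction so that the count $|\mathcal P| = m^2+m+1$ applies verbatim.
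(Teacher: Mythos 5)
Your proof is correct and follows essentially the same route as the paper: reduce to the existence of a proper distinguishing coloring from a palette of size $m+2$, and count the admissible pairs $(f(v_i),f(w_i))$ via Observation~\ref{Observation 5.6} to get exactly $(m+2)^2-3(m+2)+3=m^2+m+1$ of them, which pins down the threshold. Your write-up is in fact slightly more careful than the paper's in making explicit the use of $f(v_0)\neq f(w_0)$ in the inclusion--exclusion and the padding of the palette, but the argument is the same.
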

\begin{proof}
    Clearly, $m^2<n\leq (m+1)^2$. Let $C = \{1,\dots, m+2\}$ be a color set. Fix $a,b\in C$. Color $v_0$ with $a$ and $w_0$ with $b$. The total number of possibilities to color $(v_i,w_i)$ is $(m+2)^2$ for $0<i\leq n$, among which $m^2 + m + 1$ possibilities are allowed (by removing identical pairs, pairs with the first coordinate $a$ and pairs with the second coordinate $b$). If $n\leq m^2 + m + 1$, then we can find a coloring $f$ satisfying $(a)-(c)$ such that $f(v_0) = a$ and $f(w_0) = b$. If $n> m^2 + m + 1$, then following the arguments of Lemma \ref{Lemma 5.8}, we cannot find a proper distinguishing coloring.
\end{proof}

\begin{thm}\label{Theorem 5.10}
    {\em For all $n\geq 2$, we have $\chi_D(B_n) = \lceil \sqrt{n}\,\rceil + 1$ or $\chi_D(B_n) = \lceil \sqrt{n}\,\rceil + 2$. Moreover, if $\lceil \sqrt{n}\,\rceil = m+1$, for some $0\neq m\in \mathbb{N}$, then
    \begin{enumerate}
        \item[(i)] $\chi_D(B_n) = \lceil \sqrt{n}\,\rceil + 1$, whenever $m^2 < n\leq m^2 + m + 1$,
        \item[(ii)]  $\chi_D(B_n) = \lceil \sqrt{n}\,\rceil + 2$, whenever $m^2 +m + 2\leq n \leq (m+1)^2$.
    \end{enumerate}}
\end{thm}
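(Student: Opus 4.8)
The plan is to simply assemble the three preceding lemmas, since Theorem \ref{Theorem 5.10} is essentially their conjunction reorganized by cases. First I would combine Lemma \ref{Lemma 5.7} and Lemma \ref{Lemma 5.8} to obtain, for every $n\geq 2$, the two-sided bound $\lceil\sqrt n\,\rceil + 1 \leq \chi_D(B_n) \leq \lceil\sqrt n\,\rceil + 2$. As $\chi_D(B_n)$ is an integer lying in an interval of length one between consecutive integers, it must equal either $\lceil\sqrt n\,\rceil + 1$ or $\lceil\sqrt n\,\rceil + 2$; this is the first assertion.

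For the refined dichotomy I would fix $n\geq 2$ and write $\lceil\sqrt n\,\rceil = m+1$, noting that $n\geq 2$ forces $m\geq 1$, so the stated hypothesis $0\neq m\in\mathbb{N}$ is automatic. By definition of the ceiling we have $m < \sqrt n \leq m+1$, i.e. $m^2 < n \leq (m+1)^2$. I would then observe that this range is the disjoint union of $\{n : m^2 < n \leq m^2+m+1\}$ and $\{n : m^2+m+2 \leq n \leq (m+1)^2\}$, so cases (i) and (ii) are exhaustive; the only arithmetic to check is that $m^2+m+1 < m^2+m+2 \leq (m+1)^2$ for $m\geq 1$.

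In case (i), Lemma \ref{Lemma 5.9} gives $\chi_D(B_n) < \lceil\sqrt n\,\rceil + 2$; together with the lower bound $\chi_D(B_n)\geq \lceil\sqrt n\,\rceil + 1$ from Lemma \ref{Lemma 5.8} and integrality, this forces $\chi_D(B_n) = \lceil\sqrt n\,\rceil + 1$. In case (ii), the contrapositive direction of Lemma \ref{Lemma 5.9} gives $\chi_D(B_n) \geq \lceil\sqrt n\,\rceil + 2$, which combined with the upper bound from Lemma \ref{Lemma 5.7} yields $\chi_D(B_n) = \lceil\sqrt n\,\rceil + 2$.

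I do not anticipate any genuine obstacle here: all the substantive constructions and counting arguments are already carried out in Lemmas \ref{Lemma 5.7}–\ref{Lemma 5.9} (the explicit $(\lceil\sqrt n\,\rceil+2)$-coloring, the pigeonhole count $(m+1)^2 - 3(m+1)+3 = m^2 - m + 1 \leq m^2 < n$, and the refined count with an extra color). The only steps requiring a line of care in the write-up are the integrality argument collapsing $[\lceil\sqrt n\,\rceil+1,\lceil\sqrt n\,\rceil+2]$ to its two endpoints and the verification that the two sub-intervals genuinely partition $(m^2,(m+1)^2]$.
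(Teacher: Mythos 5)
Your proof is correct and takes exactly the same route as the paper, which simply cites Lemmas \ref{Lemma 5.7}, \ref{Lemma 5.8}, and \ref{Lemma 5.9}; you merely spell out the integrality argument and the case split that the paper leaves implicit.
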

\begin{proof}
     This follows from Lemmas \ref{Lemma 5.7}, \ref{Lemma 5.8}, and \ref{Lemma 5.9}.
\end{proof}

\begin{thm}\label{Theorem 5.11}
  {\em For all $n\geq 2$, $\chi_{D_L}(B_n) = \chi_D(B_n)$.}
\end{thm}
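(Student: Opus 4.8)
The plan is to combine the general inequality $\chi_D(G)\le\chi_{D_L}(G)$ (Fact~\ref{Fact:2.1}(3)) with an explicit list-colouring argument, so that it suffices to prove $\chi_{D_L}(B_n)\le k$, where $k:=\chi_D(B_n)$. By Theorem~\ref{Theorem 5.10}, $k\ge\lceil\sqrt n\,\rceil+1\ge 3$ since $n\ge 2$. So fix a list assignment $L$ with $|L(v)|=k$ for all $v\in V_{B_n}$. By Observation~\ref{Observation 5.6}, it is enough to produce a colouring $f$ with $f(v)\in L(v)$ satisfying $f(v_0)\ne f(w_0)$ and, for every page $1\le i\le n$, $f(v_i)\ne f(v_0)$, $f(w_i)\ne f(w_0)$, $f(v_i)\ne f(w_i)$, while the $n$ pairs $(f(v_i),f(w_i))$ are pairwise distinct.

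First I would set $f(v_0)=a$ and $f(w_0)=b$ for some $a\in L(v_0)$, $b\in L(w_0)$ with $a\ne b$ (possible as $k\ge 2$), and write, for each page $i$,
\[
P_i=P_i(a,b)=\{\,(x,y):x\in L(v_i)\setminus\{a\},\ y\in L(w_i)\setminus\{b\},\ x\ne y\,\}.
\]
Colouring the pages then amounts to choosing pairwise distinct $p_i\in P_i$. Because $|L(v_i)\setminus\{a\}|\ge k-1$ and $|L(w_i)\setminus\{b\}|\ge k-1$, and the number of diagonal pairs $(x,x)$ available is at most the smaller of these two cardinalities, one gets $|P_i|\ge(k-1)(k-2)$ for every $i$ and every admissible choice of $a,b$. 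A short computation from Theorem~\ref{Theorem 5.10} shows $n\le(k-1)(k-2)+1$, with equality precisely in case (i) of Theorem~\ref{Theorem 5.10} with $n=\lceil\sqrt n\,\rceil^2-\lceil\sqrt n\,\rceil+1$; in every other case $n\le(k-1)(k-2)$. In that generic case every $|P_i|\ge n$, and I can select the $p_i$ greedily in any order: when the $j$th page is treated, at most $j-1<n\le|P_i|$ pairs have been used, so some pair of $P_i$ is still free.

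It remains to handle the boundary case $n=(k-1)(k-2)+1$, where a priori only $|P_i|\ge n-1$. Here I claim that $a,b$ can be chosen so that $|P_{i_0}|\ge n$ for at least one page $i_0$; granting this, order the pages with $i_0$ last: each of the first $n-1$ pages then sees at most $n-2$ used pairs among $\ge n-1$ admissible ones, and page $i_0$ sees $n-1$ used pairs among $\ge n$, so the greedy selection again succeeds. To prove the claim, first note that the equality $|P_i(a,b)|=(k-1)(k-2)$ forces $|L(v_i)\setminus\{a\}|=|L(w_i)\setminus\{b\}|=k-1$ and $L(v_i)\setminus\{a\}=L(w_i)\setminus\{b\}$; in particular $a\in L(v_i)$. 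Now pick distinct $a_1,a_2\in L(v_0)$ and $b_1\in L(w_0)\setminus\{a_1,a_2\}$ (possible as $k\ge 3$), so that $(a_1,b_1)$ and $(a_2,b_1)$ are both admissible. If for \emph{both} choices every page attained the minimum value $n-1$, then for each $i$ we would have $L(v_i)\setminus\{a_1\}=L(w_i)\setminus\{b_1\}=L(v_i)\setminus\{a_2\}$ with $a_1,a_2\in L(v_i)$ and $a_1\ne a_2$, which is impossible; hence for one of $(a_1,b_1)$, $(a_2,b_1)$ some $P_{i_0}$ has size $\ge n$, proving the claim.

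The step I expect to be the real obstacle is exactly this boundary case of case (i), namely $n=\lceil\sqrt n\,\rceil\bigl(\lceil\sqrt n\,\rceil-1\bigr)+1$: the crude count yields only $n-1$ admissible pairs per page, so one is forced to extract the rigidity of the lists meeting that bound and then exploit the freedom in the colours of $v_0$ and $w_0$. Everything else reduces to Observation~\ref{Observation 5.6}, the exact values in Theorem~\ref{Theorem 5.10}, and a greedy choice.
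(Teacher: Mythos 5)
Your proof is correct and follows the same skeleton as the paper's: colour $v_0,w_0$ first, count for each page the admissible pairs in $L(v_i)\times L(w_i)$ (avoiding the root colours and the diagonal), and then pick pairwise distinct pairs greedily, using the exact thresholds from Theorem \ref{Theorem 5.10}. The genuine difference is your treatment of the boundary case $n=m^2+m+1$ of Theorem \ref{Theorem 5.10}(i), where $k=\chi_D(B_n)=m+2$. The paper's Case 2 asserts that every page admits at least $m^2+m+1\geq n$ pairs for an arbitrarily fixed choice of $c_{v_0},c_{w_0}$, but the true worst case is $(m+1)^2-(m+1)=m^2+m=n-1$, attained exactly when $c_{v_0}\in L(v_i)$, $c_{w_0}\in L(w_i)$ and $L(v_i)\setminus\{c_{v_0}\}=L(w_i)\setminus\{c_{w_0}\}$; for instance with $n=3$, $c_{v_0}=1$, $c_{w_0}=2$ and $L(v_i)=\{1,4,5\}$, $L(w_i)=\{2,4,5\}$ for all three pages, only the two pairs $(4,5)$ and $(5,4)$ remain, so the greedy selection with that fixed root colouring fails. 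Your rigidity analysis of when the bound $(k-1)(k-2)$ is attained, combined with the observation that a single page cannot be tight for two distinct choices $a_1\neq a_2$ of the colour of $v_0$ (with a common $b_1\in L(w_0)\setminus\{a_1,a_2\}$, available since $k\geq 3$), plus reordering so that the non-tight page is coloured last, is precisely what is needed to close this case. So your argument not only matches the paper's proof but repairs an off-by-one in its Case 2 count. (The paper's Case 1 figure $m^2+3m+6$ also looks like a slip --- the correct worst case is $(m+2)(m+1)=m^2+3m+2$ --- but there the conclusion $\geq(m+1)^2\geq n$ still holds, so nothing is affected.)
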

\begin{proof}
   It is enough to show that $\chi_{D_L}(B_n) \leq \chi_D(B_n)$. Let $\lceil \sqrt{n}\,\rceil = m+1$, for some $0\neq m\in \mathbb{N}$. Let $\{L(x_i)\}_{x_i\in V_{B_n}}$ be an assignment of lists with $|L(x_i)| = \chi_{D}(B_n)$, for every $x_i\in B_n$. Fix $c_{v_0}\in L(v_0)$ and $c_{w_0}\in L(w_0)$ so that $c_{v_0}\neq c_{w_0}$. Fix a pair $(v_i,w_i)$, where $0<i\leq n$. By $(L(v_i), L(w_i))$, we denote the number of color pairs $(a_i,b_i)\in L(v_i)\times L(w_i)$ such that $a_i\neq c_{v_0}$, $b_i\neq c_{w_0}$, and $a_{i}\neq b_{i}$.
    
\noindent \textsc{Case 1:} Assume $\chi_D(B_n) =\lceil \sqrt{n}\,\rceil+2$. By Theorem \ref{Theorem 5.10}(ii), we have $n\leq (m+1)^2$. Thus, $|(L(v_i), L(w_i))|\geq  m^2 + 3m + 6 \geq (m+1)^2 \geq n$. 

\noindent \textsc{Case 2:} Assume $\chi_D(B_n) =\lceil \sqrt{n}\,\rceil+1$. Since by Theorem \ref{Theorem 5.10}(i), we have $n \leq m^2 + m + 1$, thus $|(L(v_i), L(w_i))|\geq m^2 + m + 1\geq n$.

\noindent In both cases, there is a proper distinguishing coloring $f$ where $f(v_0) = c_{v_0}$ and $f(w_0)=c_{w_0}$.
\end{proof}

\begin{remark}\label{Remark 5.12}
We remark that 
$\chi_{D_{L}}(F_{n}) = \chi_{D}(F_{n}) = 1+\lceil\frac{1+\sqrt{8n+1}}{2}\rceil$ for every $n\geq 2$.
First, we prove that $\chi_{D}(F_{n})\geq 1+D(F_{n})$. If $L$ is a proper distinguishing coloring for $F_{n}$, and the color of the two vertices on the base of the $i$-th triangle is $x_{i}, y_{i}$ (see Figure \ref{Figure 6}), then the following holds:

\begin{enumerate}
    \item For every $i\in \{1,...,n\}$, $x_{i}\neq y_{i}$.
    \item For every $i,j \in \{1,...,n\}$ where $i\neq j$, $\{x_{i}, y_{i}\}\neq \{x_{j}, y_{j}\}$.
    \item The color of the central vertex $w$, say $z$, cannot be $x_{i}$ or $y_{i}$ for any $i\in \{1,...,n\}$.
\end{enumerate}

Thus, $\chi_{D_{L}}(F_{n})\geq\chi_{D}(F_{n})\geq min\{s:\binom{s}{2}\geq n\}+1=\lceil\frac{1+\sqrt{8n+1}}{2}\rceil+1=D(F_{n})+1$.\footnote{In \cite{AS2017}, Alikhani and Soltani proved that $D(F_{n})=min\{s:\binom{s}{2}\geq n\}=\lceil\frac{1+\sqrt{8n+1}}{2}\rceil$.} 
We show that $\chi_{D_{L}}(F_{n})\leq D(F_{n})+1$. 
Let $L=\{L(v)\}_{v\in V_{F_{n}}}$ be a list assignment to $F_{n}$ such that $\vert L(v)\vert =D(F_{n})+1$. Pick a color for the central vertex $w$, say $c_{w}\in L(w)$. Then $L(i)=L(v_{i})\backslash \{c_{w}\} $ has cardinality $D(F_{n})$ or $D(F_{n})+1$. By the methods of \cite{AS}, we can color $v_{i}$'s and $w_{i}$'s in a distinguishing way. Moreover, the coloring is proper as well. Consequently, $\chi_{D_{L}}(F_{n})=D(F_{n})+1$.  

\begin{figure}[!ht]\label{Figure 6}
\centering
\begin{minipage}{\textwidth}
\centering
\begin{tikzpicture}[scale=0.7]
\draw[black,] (0,0) -- (0.5,2);
\draw[black,] (0,0) -- (-0.5,2);
\draw[black,] (0.5,2) -- (-0.5,2);

\draw[black,] (0,0) -- (2,0.5);
\draw[black,] (0,0) -- (2,-0.5);
\draw[black,] (2, 0.5) -- (2, -0.5);
\draw (2, 0.5) node {$\bullet$};
\draw (2, -0.5) node {$\bullet$};

\draw[black,] (0,0) -- (0.5,-2);
\draw[black,] (0,0) -- (-0.5,-2);
\draw[black,] (0.5,-2) -- (-0.5,-2);
\draw (0.5,-2) node {$\bullet$};
\draw (-0.5,-2) node {$\bullet$};

\draw[black,] (0,0) -- (-2,0.5);
\draw[black,] (0,0) -- (-2,-0.5);
\draw[black,] (-2, 0.5) -- (-2, -0.5);
\draw (-2, 0.5) node {$\bullet$};
\draw (-2, -0.5) node {$\bullet$};

\node[circ] at (0,0) {$w$};
\node[circ] at (-0.5,2) {$v_{1}$};
\draw (-0.5,2.6) node {$x_{1}$};
\node[circ] at (0.5,2) {$v_{2}$};
\draw (0.5,2.6) node {$x_{2}$};
\node[circ] at (2,0.5) {$v_{3}$};
\draw (2.7,0.5) node {$x_{3}$};
\node[circ] at (2,-0.5) {$v_{4}$};
\draw (2.7,-0.5) node {$x_{4}$};
\node[circ] at (0.5,-2) {$v_{5}$};
\draw (0.5,-2.6) node {$x_{5}$};
\node[circ] at (-0.5,-2) {$v_{6}$};
\draw (-0.5,-2.6) node {$x_{6}$};
\node[circ] at (-2,-0.5) {$v_{7}$};
\draw (-2.7,-0.5) node {$x_{7}$};
\node[circ] at (-2,0.5) {$v_{8}$};
\draw (-2.7,0.5) node {$x_{8}$};

\draw (0.5,0.5) node {$z$};
\end{tikzpicture}
\end{minipage}
\caption{\em Coloring (L) of the Friendship graph $F_{4}$ with central vertex $w$.}
\label{Figure 6}
\end{figure}
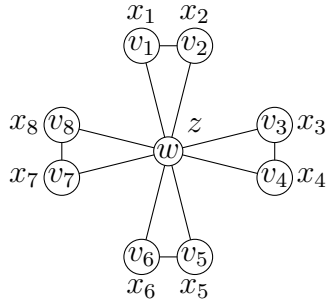

\end{remark}

\section{Acknowledgement} The authors are very thankful to the two anonymous referees for reading the manuscript in detail and for providing suggestions that improved the quality of the paper.

\textbf{Funding} The first author was supported by the EK\"{O}P-24-4-II-ELTE-996 University Excellence scholarship program of the Ministry for Culture and Innovation from the source of the National Research, Development and Innovation fund. The second author was supported by the UNKP-23-3 New National Excellence Program of the Ministry for Culture and Innovation from the source of the National Research, Development and Innovation Fund.

\textbf{Data availability} No data are associated with this article.

\textbf{Declarations}

\textbf{Conflict of interest} The authors declare that they have no conflict of interest.

\end{document}